\DeclareMathOperator{\PSL}{PSL}
\DeclareMathOperator{\SO}{SO}
\DeclareMathOperator{\SU}{SU}
\newcommand{\R}{\mathbb{R}}
\newcommand{\C}{\mathbb{C}}
\newcommand{\GmodGamma}{G / \Gamma}
\newcommand{\Hspace}{\mathbb{H}^3}
\newcommand{\FHspace}{\mathcal{F}\Hspace}
\newcommand{\tmu}{\tilde{\mu}}
\newcommand{\M}{\mathcal{M}}
\newcommand{\injM}{\mathrm{inj}_{\M}}
\theoremstyle{plain}
\newtheorem{thm}{Theorem}[section]
\newtheorem{lemma}[thm]{Lemma}
\newtheorem{cor}[thm]{Corollary}
\theoremstyle{definition}
\newtheorem{dfn}{Definition}[section]
\theoremstyle{remark}
\newtheorem{rem}[thm]{Remark}
\numberwithin{equation}{section} 
\newtheoremstyle{TheoremNum}
{\topsep}{\topsep}
{\itshape}        
{}                
{\bfseries}       
{.}               
{ }               
{\thmname{#1}\thmnote{ \bfseries #3}}
\theoremstyle{TheoremNum}
\newtheorem{thmnm}{Theorem}
\begin{document}
	
\graphicspath{ {} }

\pagestyle{plain}
\title{Horospherically invariant measures and finitely generated Kleinian groups}
\author{Or Landesberg}
\thanks{This work was supported by ERC 2020 grant HomDyn (grant no.~833423).}

\begin{abstract}
	Let $ \Gamma\! <\! \PSL_2(\C) $ be a Zariski dense finitely generated Kleinian group. We show all Radon measures on $ \PSL_2(\C) / \Gamma $  which are ergodic and invariant under the action of the horospherical subgroup are either supported on a single closed horospherical orbit or quasi-invariant with respect to the geodesic frame flow and its centralizer. We do this by applying a result of Landesberg and Lindenstrauss \cite{LL2019Radon} together with fundamental results in the theory of 3-manifolds, most notably the Tameness Theorem by Agol~\cite{Agol} and Calegari-Gabai~\cite{Calegari-Gabai}.
\end{abstract}

\maketitle

\section{Introduction}

Let $ G=\PSL_2(\C) $ be the group of orientation preserving isometries of $ \Hspace $, equipped with a right-invariant metric. Let $ \Gamma < G $ be a discrete subgroup (a.k.a.~a Kleinian group) and let
\begin{equation*}
U = \left\{u_z = \begin{pmatrix} 1 & z \\ & 1 \end{pmatrix} \; :\; z \in \C \right\}
\end{equation*}
be a horospherical subgroup. We are interested in understanding the $ U $-ergodic and invariant Radon measures (e.i.r.m.) on $ \GmodGamma $. 
\medskip

Unipotent group actions, and horospherical group actions in particular, exhibit remarkable rigidity. There are only very few finite $ U $-ergodic and invariant measures as implied by Ratner's Measure Rigidity Theorem~\cite{Ratner1991} --- indeed if $ \Gamma < G $ is a discrete subgroup the only finite measures on $\GmodGamma$ that are $U$-ergodic and invariant are those supported on a compact $ U $-orbit and possibly also Haar measure if $ \GmodGamma$ has finite volume. This result was predated in special cases by Furstenberg \cite{Furstenberg1973}, Veech \cite{Veech1977} and Dani \cite{Dani1978}.

This scarcity of ergodic invariant measures with respect to the action of the horospherical subgroup extends to infinite invariant Radon measures under the additional assumption that $ \Gamma $ is geometrically finite. In this setting the only $ U $-e.i.r.m.~not supported on a single $ U $-orbit is the Burger-Roblin measure, as shown by Burger \cite{Burger1990}, Roblin \cite{Roblin2003} and Winter \cite{Winter2015}.

When considering infinite invariant Radon measures over geometrically infinite quotients this uniqueness phenomenon breaks down. It was discovered by Babillot and Ledrappier \cite{Babillot-Ledrappier} that abelian covers of compact hyperbolic surfaces support an uncountable family of horocycle invariant ergodic and recurrent Radon measures. A characteristic feature of this family of invariant measures is quasi-invariance with respect to the geodesic frame flow, a property also exhibited by the Burger-Roblin measure in the geometrically finite context. We note that such quasi-invariant $ U $-e.i.r.m.~scorrespond to  $ \Gamma $-conformal measures on the boundary of hyperbolic space, see \S\ref{subsec_Gamma conf meas}.

A natural question arises --- do all non-trivial $ U $-e.i.r.m.~on $ \GmodGamma $ correspond to $ \Gamma $-conformal measures in this manner? An affirmative answer was given in several different setups by Sarig \cite{Sarig2004,Sarig2010}, Ledrappier \cite{Ledrappier2008, Ledrappier-Sarig} and Oh and Pan \cite{Oh-Pan}. Particularly, in the context of geometrically infinite hyperbolic 3-manifolds, Ledrappier's result \cite{Ledrappier2008} implies any locally finite measure which is ergodic and invariant w.r.t.~the horospherical foliation on the unit tangent bundle of a regular cover of a compact 3-manifold is also quasi-invariant w.r.t.~the geodesic flow. This was later extended in the case of abelian covers to the full frame bundle by Oh and Pan \cite{Oh-Pan}.

Inspired by Sarig \cite{Sarig2010}, the measure rigidity results above were significantly extended in \cite{LL2019Radon} to a wide variety of hyperbolic manifolds, including as a special case all regular covers of geometrically finite hyperbolic $ d $-manifolds.
\medskip

In this paper we apply a geometric criterion in \cite{LL2019Radon} for quasi-invariance of a $ U $-e.i.r.m.~to prove measure rigidity in the setting of geometrically infinite hyperbolic $ 3 $-manifolds with finitely generated fundamental group. This is only possible due to the deep understanding of the geometry of such manifolds and their ends, specifically the fundamental results of Canary \cite{Canary1996}, and the proof of the Tameness Conjecture by Agol \cite{Agol} and Calegari-Gabai \cite{Calegari-Gabai}. 

\subsection{Statement of the main theorem}
Following Sarig, we call a $ U $-ergodic and invariant Radon measure on $ \GmodGamma $ \emph{trivial} if it is supported on a single proper horospherical orbit of one of the following two types:
\begin{enumerate}
	\item a horospherical orbit based outside the limit set, and thus wandering to infinity through a geometrically finite end (see \S\ref{Sec_Ends of 3-manifolds});
	\item a horospherical orbit based at a parabolic fixed point, and thus bounding a cusp.
\end{enumerate}

A measure $ \mu $ is called \emph{quasi-invariant} with respect to an element $ \ell \in G $ if $ \ell.\mu \sim \mu $, i.e.~if the action of $ \ell $ on $ \GmodGamma $ preserves the measure class of $ \mu $. We say $ \mu $ is $ H $-quasi-invariant with respect to a subgroup $ H < G $ if it satisfies the above criterion for all elements $ \ell \in H $. Denote by $ N_G(U) $ the normalizer of $ U $ in $ G $. 

We show the following:
\begin{thm}\label{Main_thm}
	Let $ \Gamma < G $ be a Zariski dense finitely generated Kleinian group. Then any non-trivial $ U $-e.i.r.m.~on $ \GmodGamma $ is $ N_G(U) $-quasi-invariant.
\end{thm}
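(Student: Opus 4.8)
\noindent\emph{Proof strategy.}
The plan is to feed the geometric structure of $\Gamma\backslash\Hspace$ into the quasi-invariance criterion of \cite{LL2019Radon}. I would begin with the group-theoretic reduction. The normalizer $N_G(U)$ is the full upper-triangular Borel subgroup $MAU$, where $A=\{a_t\}$ generates the geodesic frame flow, $M\cong\mathrm{U}(1)$ is the frame-rotation subgroup, and $AM=Z_G(A)$ is the centralizer of the flow. Since $\mu$ is already $U$-invariant, $N_G(U)$-quasi-invariance is equivalent to $AM$-quasi-invariance, which is exactly quasi-invariance under the geodesic frame flow and its centralizer, as in the abstract. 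Zariski density enters here to guarantee that $\Gamma$ is non-elementary and not virtually contained in a conjugate of $\PSL_2(\R)$, so that the rotation subgroup $M$ acts essentially and the $N_G(U)$-conclusion genuinely strengthens $U$-invariance.

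Next I would invoke the criterion of \cite{LL2019Radon}. It provides a geometric sufficient condition for a $U$-e.i.r.m.~to be $N_G(U)$-quasi-invariant: roughly, that $\mu$ be conservative for the $U$-action, with $\mu$-almost every frame carried by a \emph{non-wandering} horosphere whose defining geodesic returns infinitely often to a fixed compact subset of $\Gamma\backslash\Hspace$. Granting this, the theorem reduces to two geometric assertions: (a) every ergodic $U$-invariant Radon measure that is \emph{dissipative}, i.e.~concentrated on a single horospherical orbit, is supported on a closed orbit of one of the two trivial types; and (b) for a \emph{conservative} $\mu$ the recurrence hypothesis of the criterion holds $\mu$-a.e. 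Assertion (a) follows because a $U$-orbit carrying an ergodic invariant Radon measure must be locally closed, hence closed, and a closed horospherical orbit is based either at a point of the domain of discontinuity $\Omega$ (wandering out a geometrically finite end, type (1)) or at a parabolic fixed point whose stabilizer acts cocompactly on the horosphere (bounding a cusp, type (2)).

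The heart of the argument is assertion (b), and this is where the $3$-manifold theory is indispensable. Since $\Gamma$ is finitely generated, the Tameness Theorem of Agol \cite{Agol} and Calegari--Gabai \cite{Calegari-Gabai} shows $\Gamma\backslash\Hspace$ is homeomorphic to the interior of a compact manifold, so by the classification of ends (Canary \cite{Canary1996} and the surrounding theory) each end is either geometrically finite or simply degenerate. I would argue that the only horospheres escaping to infinity are the trivial ones: those based in $\Omega$ escape through geometrically finite ends, and those based at parabolic points escape into cusps. For a simply degenerate end I would use geometric tameness --- the existence of a sequence of simplicial hyperbolic (or pleated) surfaces exiting the end with uniformly controlled geometry --- to show that a geodesic heading toward the end crosses infinitely many of these surfaces and hence returns to a bounded neighborhood of a fixed compact core infinitely often, so its horosphere is non-wandering and its base point is a horospherical limit point. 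Consequently a non-trivial, hence conservative, $\mu$ gives full measure to frames whose geodesics are recurrent in the required sense, verifying the hypothesis of the criterion and yielding $N_G(U)$-quasi-invariance.

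The main obstacle I anticipate is precisely the analysis of the simply degenerate ends in assertion (b). Unlike the geometrically finite case, a degenerate end need not have bounded geometry, so converting the qualitative statement ``geodesics head out the end'' into the quantitative recurrence demanded by the criterion requires the full strength of the geometric tameness machinery: the exiting family of surfaces, Canary's covering and filling theorems, and control of the injectivity radius along that family. A secondary technical point is to treat the parabolic cusps uniformly with the degenerate ends, and to confirm that the dissipative/conservative dichotomy for $U$ interacts cleanly with ergodicity, so that every non-trivial measure is genuinely conservative and thus falls under case (b) rather than (a).
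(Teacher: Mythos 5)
Your overall frame --- reduce $N_G(U)$-quasi-invariance to $MA$-quasi-invariance and then verify a geometric criterion from \cite{LL2019Radon} using tameness --- matches the paper, but the criterion you quote is not the one the paper uses, and this produces a genuine gap at exactly the point you yourself identify as the heart of the argument. The criterion actually invoked (\cref{Thm_MA-qi}) makes no mention of conservativity or of recurrence to a compact set: it asks that the set $\Xi_x=\limsup_{t\to\infty}\left(a_{-t}g\Gamma g^{-1}a_t\right)$ of geometric limits of the conjugated groups contain a Zariski dense subgroup of $G$ for $\mu$-a.e.~$x$. Your assertion (b) --- that for non-trivial $\mu$ almost every geodesic returns infinitely often to a bounded neighborhood of a fixed compact core --- is false, and the mechanism you sketch for degenerate ends does not repair it: the simplicial hyperbolic surfaces furnished by geometric tameness \emph{exit} the end, so a geodesic crossing infinitely many of them is moving further out, not coming back. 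Recurrence of the geodesic to a compact set characterizes \emph{conical} limit points, while conservativity of the $U$-action corresponds only to \emph{horospherical} limit points, and in a degenerate end these classes genuinely differ; a non-trivial $U$-e.i.r.m.~can perfectly well give full measure to frames whose backward geodesics wander to infinity through a geometrically infinite end. This wandering case is precisely the one your proposed recurrence-type criterion cannot see, and it is the case to which the paper devotes most of its proof.

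What the paper does instead: after reducing to torsion-free $\Gamma$ via Selberg's lemma (a lifting/projection argument for ergodic measures along the finite cover, which your outline omits but which is needed before speaking of the manifold $\M$ and its ends), it observes that $I_\mu=\limsup_{t\to\infty}\injM(p(a_{-t}x))$ is $U$-invariant, hence a.e.~constant. The extremes give triviality: $I_\mu=0$ forces the trajectory into a cusp, and $I_\mu=\infty$ forces it arbitrarily far from $C(\M)$ by Canary's uniform bound $\kappa$ on the injectivity radius over the convex core (\cref{Canary's corollary}), i.e.~a horosphere based off the limit set. (This trichotomy replaces your dissipative/conservative dichotomy and your local-closedness claim for orbits, which you do not justify.) When $0<I_\mu<\infty$, the recurrent subcase is easy ($\Xi_x\supseteq h\Gamma h^{-1}$); in the wandering subcase one extracts, along times $t_n\to\infty$ at which the injectivity radius stays above some $\epsilon>0$ and the points $p(a_{-t_n}g\Gamma)$ lie in $C(\M)$ (using \cref{lemma_infinite end inside convex core}), a geometric limit $\Sigma=\lim a_{-t_n}g\Gamma g^{-1}a_{t_n}$, a discrete subgroup contained in $\Xi_x$. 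If $\Sigma$ were not Zariski dense it would be of the second kind, so $\Hspace/\Sigma$ has unbounded injectivity radius (\cref{lemma_unbounded inj rad}); transplanting an embedded ball of radius $>\kappa$ to bounded distance from $p(a_{-t_n}g\Gamma)$ puts its center outside $C(\M)$, and the connecting arc meets $Y=\partial C(\M)\cap\overline{(\M^0)^{(R)}}$, which is compact by Ahlfors finiteness --- contradicting wandering. Hence $\Sigma$ is Zariski dense and \cref{Thm_MA-qi} applies with no recurrence hypothesis at all. The heavier end-classification machinery (pleated/simplicial surfaces, covering theorems) you propose is not needed; only tameness, Canary's injectivity-radius bound, the relative compact core, and Ahlfors finiteness enter.
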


Note that this theorem implies in particular that the only proper horospherical orbits in $ \GmodGamma $ are trivial (otherwise such orbits would support a non-trivial $ U $-e.i.r.m.~given as the pushforward of the Haar measure of $ U $). Cf.~\cite{Bellis, Lecuire_Mj, Ledrappier1998, Matsumoto} for related results on the topological properties of horospherical orbits in the geometrically infinite setting.
\medskip

The subgroup $ U $ is the unstable horospherical subgroup w.r.t.
\[ A=\left\{a_t = \begin{pmatrix} e^{\nicefrac{t}{2}} & \\ &  e^{\nicefrac{-t}{2}} \end{pmatrix} \; :\; t \in \R \right\}, \]
the $ \R $-diagonal Cartan subgroup. We denote by
\[ K = \SU(2) / \{ \pm I \} \cong \SO(3) \]
the maximal compact subgroup fixing the origin in $ \Hspace $. Set
\[  M=Z_K(A) = \left\{\begin{pmatrix} e^{i\theta} & \\ &  e^{-i\theta} \end{pmatrix} \; :\; \theta \in \R \right\} \]
the centralizer of $ A $ in $ K $. Note that $ N_G(U) = MAU $, hence $ N_G(U) $-quasi-invariance in the statement of \cref{Main_thm} may be replaced with $ MA $-quasi-invariance.
\medskip

The subgroup $ P = MAU = N_G(U) $ is a minimal parabolic subgroup of $ G $. We may draw the following simple corollary of our main theorem:
\begin{cor}\label{Cor_P measures}
	Let $ \Gamma < G $ be a Zariski dense finitely generated Kleinian group. Then any $ U $-invariant Radon measure on $ \GmodGamma $ which is $ P $-quasi-invariant and ergodic, is also $ U $-ergodic unless it is supported on a single $P$-orbit fixing a parabolic fixed point or a point outside the limit set of $\Gamma$.
\end{cor}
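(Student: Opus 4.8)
The plan is to decompose $\mu$ into its $U$-ergodic components and to exploit the fact that $P = N_G(U)$ permutes them. Since $\mu$ is $U$-invariant and $\GmodGamma$ is a standard Borel space carrying a $\sigma$-finite (Radon) measure, I would first invoke the ergodic decomposition of $\mu$ under the abelian group $U$: there is a standard Borel space $\Omega$, a measurable family $\{\mu_\omega\}_{\omega \in \Omega}$ of $U$-ergodic Radon measures (defined up to scalar), and a measure $\bar\mu$ on $\Omega$ with $\mu = \int_\Omega \mu_\omega \, d\bar\mu(\omega)$, where distinct $\omega$ correspond to mutually singular measure classes $[\mu_\omega]$. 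Because $P$ normalizes $U$, for $p \in P$ and any $U$-invariant (resp. $U$-ergodic) measure $\nu$ the pushforward $p_* \nu$ is again $U$-invariant (resp. $U$-ergodic); hence $P$ acts on $\Omega$ by the Mackey action $p \cdot \omega := [\,p_* \mu_\omega\,]$. Since $\mu$ is $P$-quasi-invariant and $P$-ergodic, the induced $\bar\mu$ on $\Omega$ is $P$-quasi-invariant and $P$-ergodic.

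Next I would split $\Omega$ according to the dichotomy of \cref{Main_thm}. Each $\mu_\omega$ is a $U$-e.i.r.m., hence either \emph{trivial} or not. To each trivial component, supported on a single closed $U$-orbit $U g \Gamma$, I attach its base point $\xi(\omega) = \Gamma(g^{-1}\cdot \infty) \in \Gamma \backslash \partial\Hspace$, where $\infty$ is the fixed point of $U$ and $P = \mathrm{Stab}_G(\infty)$. Using $pU = Up$ together with the fact that $p^{-1}$ fixes $\infty$, one checks the geometric identity $p \cdot U g \Gamma = U(pg)\Gamma$, whose base point $(pg)^{-1}\cdot \infty = g^{-1}\cdot \infty$ is unchanged; thus $\xi$ is $P$-invariant and, in particular, the set $\Omega_{\mathrm{triv}}$ of trivial components is $P$-invariant. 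By ergodicity of $\bar\mu$, either $\bar\mu$-a.e. component is non-trivial, or $\bar\mu$-a.e. component is trivial.

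In the first case I apply \cref{Main_thm}: each non-trivial $\mu_\omega$ is $MA$-quasi-invariant, and combined with $U$-invariance this makes $\mu_\omega$ quasi-invariant under all of $P = MAU$. Hence $p_* \mu_\omega \sim \mu_\omega$, so $p \cdot \omega = \omega$ for every $p \in P$, i.e.\ $P$ acts trivially on $\Omega$ modulo $\bar\mu$. Ergodicity then forces $\bar\mu$ to be a single atom, so $\mu$ coincides with one $U$-ergodic component and is therefore $U$-ergodic. In the second case the $P$-invariant map $\xi$ is $\bar\mu$-a.e.\ equal to a constant $\Gamma \xi_0$ by ergodicity; writing $g_0^{-1}\cdot \infty = \xi_0$, the collection of $U$-orbits based at a point of $\Gamma\xi_0$ is precisely $P g_0 \Gamma / \Gamma$, the single $P$-orbit of $g_0\Gamma$. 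Thus $\mu$ is supported on this one $P$-orbit, whose base point $\xi_0$ lies outside $\Lambda$ or is a parabolic fixed point — exactly the excluded alternative.

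The hard part will not be the dynamical dichotomy itself but the bookkeeping that makes it rigorous in the infinite-measure setting: constructing the ergodic decomposition and the associated $P$-action on $\Omega$ for a Radon measure (components only well defined up to scalar), and verifying the relevant measurability, namely of $\omega \mapsto [\mu_\omega]$, of $\Omega_{\mathrm{triv}}$, and of the base-point map $\xi$. The single clean geometric input driving both cases is the identity $p \cdot U g \Gamma = U(pg)\Gamma$ and its consequence that $P$ preserves both triviality and the base point of a trivial orbit; once these are in place, $P$-ergodicity of $\bar\mu$ collapses each case as above.
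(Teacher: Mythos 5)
Your proposal is correct in substance and shares the paper's opening move---decompose $\nu$ into $U$-ergodic components and apply \cref{Main_thm} to each---but its second half runs along a genuinely different track. The paper never constructs the Mackey action of $P$ on the component space. Instead it observes that $\pi^{-1}(\Lambda_{\mathrm{par}})/\Gamma$ and $\pi^{-1}(P\backslash G \smallsetminus \Lambda)/\Gamma$ are $P$-invariant subsets of $\GmodGamma$, so $P$-ergodicity of $\nu$ itself yields a trichotomy (a.e.\ component $MA$-quasi-invariant, or a.e.\ component trivial of parabolic type, or a.e.\ component trivial of wandering type). In the first case it shows every $U$-invariant set is mod-$\nu$ $P$-invariant and concludes $U$-ergodicity from $P$-ergodicity---essentially the same mechanism as your ``$P$ acts trivially on $\Omega$, hence $\bar\mu$ is an atom.'' In the parabolic case it uses countability of $\Lambda_{\mathrm{par}}$ to locate a single $P$-orbit of positive mass, and in the wandering case it argues topologically: a dense $P$-orbit in $\mathrm{supp}(\nu)$ plus proper discontinuity of the $\Gamma$-action off the limit set force the support to be one $P$-orbit. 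Your route buys uniformity---a.e.-constancy of the base-point map $\xi$ dispatches both trivial sub-cases at once---at the cost of the heavier bookkeeping you rightly flag: equivariant essential uniqueness of the ergodic decomposition for Radon measures, and measurability of $\Omega_{\mathrm{triv}}$ and of $\xi$. One point you should make explicit rather than leave implicit: ergodicity forces a $P$-invariant map to be a.e.\ constant only when the target is countably separated, and $\Gamma\backslash\partial\Hspace$ as a whole is not; what saves you is that trivial components are based in $\Lambda_{\mathrm{par}}\cup(\partial\Hspace\smallsetminus\Lambda)$, whose $\Gamma$-quotient \emph{is} countably separated---countable on the first piece, standard Borel on the second precisely because the action on the domain of discontinuity is properly discontinuous. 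This is the same geometric input the paper deploys topologically in its third case, so it cannot be avoided; with it stated, your argument closes correctly.
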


\begin{rem}
	One implication of this corollary is the $ U $-ergodicity of Haar measure on $ \GmodGamma $ whenever $ \Gamma $ is finitely generated and of the first kind (having a limit set equal to $ \partial \Hspace $). This follows from the fact that Haar measure is $ P $-invariant and under these assumptions also $ MA $-ergodic (see \cite[Theorem 9.9.3]{Thurston}). As pointed out to us by Hee Oh, this can also be derived using different means (applicable in more general settings), see \cite[Theorem 7.14]{Lee-Oh}.
\end{rem}

\subsection{$ MA $-quasi-invariance} 
Given a discrete subgroup $ \Gamma < G $, the space $ K \backslash \GmodGamma $ is naturally identified with a corresponding 3-dimensional orbifold  $ \Hspace / \Gamma $. The space $ \GmodGamma $ is identified with its frame bundle\footnote{Strictly speaking, this is the case when $ \Hspace / \Gamma $ is a manifold, and can be used to define the frame bundle when $ \Hspace / \Gamma $ is an orbifold.} $ \FHspace / \Gamma $. The left action of $ A $ on $ \GmodGamma $ corresponds to the geodesic frame flow on $ \FHspace / \Gamma $ and the action of $ M $ corresponds to a rotation of the frames around the direction of the geodesic.
\cref{Main_thm} relies on a geometric criterion for $ MA $-quasi-invariance developed in \cite{LL2019Radon} and described below.
\medskip

Given $ x=g\Gamma \in \GmodGamma $ denote
\[ \Xi_{x} = \bigcap_{n=1}^\infty \overline{\bigcup_{t \geq n} a_{-t}g\Gamma g^{-1}a_t}, \]
the set of accumulation points in $ G $ of all sequences $ x_n \in a_{-t_n}g\Gamma g^{-1}a_{t_n} $ for $ t_n \to \infty $, sometimes also denoted as $ \limsup_{t \to \infty} \left(a_{-t}g\Gamma g^{-1}a_t\right) $.

Recall that $ a_{-t}g\Gamma g^{-1}a_t $ is the stabilizer of the point $ a_{-t}x $ in $ \GmodGamma $. Hence $ \Xi_{x} $ may be viewed as the set of accumulation points of elements of $ \Gamma $ as ``seen'' from the viewpoint of the geodesic ray $ \left\{a_{-t}x\right\}_{t\geq 0} $. Using Thurston's notion of a ``geometric limit'', we may say the set $ \Xi_x $ contains all geometric limits of subsequences of the family $ (a_{-t}g\Gamma g^{-1}a_t)_{t \geq 0} $.

We shall make use of the following sufficient condition for $ MA $ quasi-invariance established in arbitrary dimension in \cite{LL2019Radon}:

\begin{thm}\label{Thm_MA-qi}
	Let $ \Gamma < G $ be any discrete subgroup and let $ \mu $ be any $ U $-e.i.r.m. on $ \GmodGamma $. If $ \Xi_x $ contains a Zariski dense subgroup of $ G $ for $ \mu $-a.e.~$ x \in \GmodGamma $, then $ \mu $ is $ MA $-quasi-invariant.
\end{thm}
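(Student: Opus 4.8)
\emph{Overall plan and reductions.}
The plan is to show that the measure class of $\mu$ is preserved by the centralizer $Z_G(A)=MA$ of the geodesic flow, by renormalizing $\mu$ along the ray $\{a_{-t}x\}_{t\geq0}$, transferring the symmetries recorded in $\Xi_{x}$ to a limiting measure, and then using Zariski density to promote these to all of $MA$. Since $A$ normalizes $U$ and $M=Z_K(A)$ both normalizes $U$ and centralizes $A$, for every $p\in MA$ the pushforward $p.\mu$ is again a $U$-ergodic invariant Radon measure: for $u\in U$ one has $u.(p.\mu)=p.\bigl((p^{-1}up).\mu\bigr)=p.\mu$, using $p^{-1}up\in U$. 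Two $U$-ergodic invariant Radon measures are either equivalent or mutually singular, so $Q:=\{p\in MA:\ p.\mu\sim\mu\}$ is a subgroup of $MA\cong\C^{*}$; and since $\mu$ is $U$-invariant, the $N_G(U)=MAU$-quasi-invariance asserted by the theorem is equivalent to $Q=MA$. Thus it suffices to prove $Q=MA$, where $A\cong\R_{>0}$ and $M\cong S^{1}$, so that \emph{both} directions of $\C^{*}$ must be filled.

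\emph{Why the surviving symmetry is $MA$.}
The right renormalization is the geodesic flow itself. For $p\in MA$, $v^{-}\in V^{-}$ and $u\in U$ one has $a_{t}v^{-}a_{-t}\to e$, $a_{t}pa_{-t}=p$, and $a_{t}ua_{-t}\to\infty$ as $t\to+\infty$. Because $\mu$ is \emph{exactly} $U$-invariant, the expanding $U$-direction is invisible to its measure class, while the contracting $V^{-}$-direction disappears in the limit; hence, among all of $G$, only the part of a geometric limit that commutes with $A$ — its $MA$-component — can register as a surviving measure-class symmetry after renormalization. This explains both why the conclusion is precisely $MA$-quasi-invariance and why $\Xi_{x}$ is the relevant object: its elements are exactly the geometric limits of the stabilizers $a_{-t}g\Gamma g^{-1}a_{t}$ seen along the ray, i.e.\ the candidate symmetries of the renormalized picture.

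\emph{Limit measure and the role of Zariski density.}
Using the recurrence guaranteed by $\Xi_{x}\neq\varnothing$ (bounded injectivity radius along the ray infinitely often) I would pass, for $\mu$-a.e.\ $x=g\Gamma$, to a weak-$*$ subsequential limit $\nu$ of the renormalized measures $a_{-t_{n}}.\mu$, normalized on a fixed recurrence set. The near-returns $a_{-t_{n}}g\gamma_{n}g^{-1}a_{t_{n}}\to h$, with $h$ ranging over the given Zariski dense subgroup $\Delta<\Xi_{x}$, should equip $\nu$ with quasi-invariance under all of $\Delta$, with a Radon--Nikodym cocycle forced to be of conformal (Busemann) type and hence continuous. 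Continuity of the cocycle is what lets one extend quasi-invariance of $\nu$ from $\Delta$ to its Zariski closure $\overline{\Delta}^{Z}=G$, so that in particular $\nu$ is $MA$-quasi-invariant; intersecting the resulting symmetry with $Z_G(A)=MA$ — the directions compatible with the renormalization — and descending from $\nu$ back to $\mu$ then yields $Q=MA$.

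\emph{Main obstacle.}
The crux is the third step: converting the purely geometric near-returns $a_{-t_{n}}g\gamma_{n}g^{-1}a_{t_{n}}\to h$ into an honest Radon--Nikodym relation for $\nu$, and descending it to $\mu$. This demands simultaneous control of the $U$-conditional (leafwise) measures of $\mu$ along the sequence $a_{-t_{n}}x$ — these are Haar on $U$ by $U$-invariance and so faithfully transport the transverse $V^{-}MA$-data — together with the absorption of the expanding $U$-components without destroying weak-$*$ convergence, and the extraction of a non-subsequential, a.e.\ statement rather than a mere approximate symmetry. The subsequent appeal to Zariski density is precisely what forces \emph{both} the $A$- and the full $M$-direction into $Q$: an individual family of returns can be trapped in a proper closed subgroup (e.g.\ in $\R^{*}\subset\C^{*}$ when the limiting geometry collapses onto a totally geodesic surface), and only the passage from $\Delta$ to its Zariski closure, through regularity of the cocycle, overcomes this and delivers all of $MA$.
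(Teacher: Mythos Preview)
The paper does not prove this theorem. \Cref{Thm_MA-qi} is quoted from \cite{LL2019Radon} and used as a black box in the proof of \Cref{Main_thm}; there is no ``paper's own proof'' to compare against. So the relevant question is whether your sketch is a plausible outline of the argument in \cite{LL2019Radon}, and whether it stands on its own.

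As a heuristic, your plan captures the right architecture: the symmetry group $Q=\{p\in MA: p.\mu\sim\mu\}$ is a closed subgroup, the geometric limits in $\Xi_x$ supply candidate symmetries, the Bruhat decomposition together with $U$-invariance singles out the $MA$-component of each such limit, and Zariski density is what forces $Q$ to be all of $MA$ rather than a proper closed subgroup. That much is correct in spirit.

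However, the mechanism you propose --- passing to a weak-$*$ limit $\nu$ of normalized pushforwards $a_{-t_n}.\mu$, establishing quasi-invariance of $\nu$ under $\Delta$, extending to $G$ via cocycle regularity, and then ``descending from $\nu$ back to $\mu$'' --- is not how the argument actually runs, and the last step is a genuine gap you have not addressed. A weak-$*$ limit of $a_{-t_n}.\mu$ need not be in the same measure class as $\mu$, so quasi-invariance of $\nu$ tells you nothing direct about $\mu$. The proof in \cite{LL2019Radon} works with $\mu$ itself throughout: given a near-return $a_{-t_n}g\gamma_n g^{-1}a_{t_n}\to h$, one writes $h$ in its $V^-\!\cdot MA\cdot U$ decomposition and shows, via an alignment argument on the $U$-leafwise conditionals (using that these are Haar and hence transport well under the flow, together with a ratio-ergodic-theorem input), that $\mu$ is quasi-invariant under the $MA$-part of $h$. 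No auxiliary limit measure intervenes. Your ``Main obstacle'' paragraph correctly locates the difficulty, but your proposed resolution routes through the wrong object.
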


Understanding the ``ends'' of a 3-manifold allows to draw conclusions regarding the different $ \Xi_{x} $ observed along divergent geodesic trajectories. The strategy of proof in \cref{Main_thm} is to import the necessary knowledge regarding the geometry of 3-manifolds with finitely generated fundamental group, in order to show the conditions of \cref{Thm_MA-qi} are satisfied for all non-trivial measures.

In principle the technique used in the proof of \cref{Main_thm} is applicable to any dimension. Let $ G_d = \SO^+(d,1) $ denote the group of orientation preserving isometries of hyperbolic $ d $-space, for any $ d \geq 2 $. Set:
\begin{itemize}[leftmargin=*]
	\item $ A=\{a_t\}_{t \in \R} $ --- Cartan subgroup of $ \R $-diagonal elements.
	\item $ U = \{ u : a_{-t}ua_{t} \to e \text{ as } t \to \infty \} $ --- the unstable horospherical subgroup.
	\item $ K \cong SO(d) $ --- stabilizer of the origin in $ \mathbb{H}^d $ (maximal compact subgroup).
	\item $ M=Z_K(A) \cong \SO(d-1) $ --- the centralizer of $ A $ in $ K $.
\end{itemize} 

Then an adaptation of the proof of \cref{Main_thm} can be used to show the following:
\begin{thm}\label{Thm_d-dim}
	Let $ \Gamma < G_d $ be a discrete group with a uniform upper bound on the injectivity radius at all points in $ G_d / \Gamma $. Then any $ U $-e.i.r.m.~$ \mu $ on $ G_d / \Gamma $ is either supported on a single proper horospherical orbit bounding a cusp or is $ MA $-quasi-invariant. 
\end{thm}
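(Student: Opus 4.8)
The plan is to verify, for a non-trivial $\mu$, the hypothesis of \cref{Thm_MA-qi}: that for $\mu$-a.e.\ $x = g\Gamma$ the geometric limit set $\Xi_x$ contains a Zariski dense subgroup of $G_d$, whence $MA$-quasi-invariance is immediate. Throughout I would follow the architecture of the proof of the main theorem, with the uniform upper bound on the injectivity radius playing the role that tameness and Canary's analysis of ends play in dimension three. A first, preliminary observation is that this bound already forces $\Gamma$ to be Zariski dense: otherwise $\Gamma$ would be virtually contained in the stabilizer of a boundary point, of an interior point, or of a proper totally geodesic subspace, and in each of these cases $\Hd/\Gamma$ contains a funnel-type region along which the injectivity radius grows without bound, contradicting the hypothesis.

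The core of the argument is a radial/parabolic dichotomy for the backward endpoint. For $x = g\Gamma$ let $x^-$ denote the backward endpoint on $\partial\Hd$ of the geodesic ray $\{a_{-t}x\}_{t \ge 0}$; since $U$ is precisely the horospherical subgroup based at $x^-$, the map $x \mapsto x^-$ is $U$-invariant and hence, by ergodicity, the limit point $x^-$ is $\mu$-a.e.\ radial or $\mu$-a.e.\ non-radial. If $x^-$ is $\mu$-a.e.\ non-radial, then the bounded injectivity radius is decisive: a backward geodesic failing to return to any compact set can only escape through a cusp (an infinite-volume funnel end, the sole alternative for a divergent ray, would force the injectivity radius to grow), so $x^-$ is $\mu$-a.e.\ a parabolic fixed point, and the structure theory of horospherically invariant measures then pins $\mu$ onto a single horospherical orbit bounding a cusp --- the excluded alternative in the statement. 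Hence for a non-trivial $\mu$ the endpoint $x^-$ is $\mu$-a.e.\ radial, i.e.\ the backward geodesic returns to a fixed compact set $C$ along a sequence $t_n \to \infty$.

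Once recurrence is in hand the geometric limit is easy to identify. Writing $a_{-t_n} g = g_n h_n$ with $g_n\Gamma \in C$ and $h_n \to e$, and passing to a subsequence with $g_n \to g_0$, Chabauty continuity of conjugation gives
\[ a_{-t_n} g \Gamma g^{-1} a_{t_n} = g_n \left( h_n \Gamma h_n^{-1} \right) g_n^{-1} \longrightarrow g_0 \Gamma g_0^{-1}. \]
Thus the conjugate $g_0\Gamma g_0^{-1}$, Zariski dense by the first paragraph, is a subgroup of $\Xi_x$, and \cref{Thm_MA-qi} delivers the $MA$-quasi-invariance of $\mu$.

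The step I expect to be the main obstacle is the dichotomy of the second paragraph. Because $\mu$ is only assumed $U$-invariant --- and its $A$-quasi-invariance is the very conclusion sought --- one cannot apply Poincaré recurrence to the geodesic flow directly; the recurrence of $\mu$-a.e.\ backward ray must instead be read off from the conservative/dissipative decomposition of the $A$-action together with the identification of dissipative $U$-ergodic measures as single horospherical orbits. Making this rigorous, and in particular verifying that the injectivity-radius bound genuinely excludes every divergent non-cuspidal behaviour so that the dissipative case yields only cusp-bounding orbits, is the delicate point; the remaining steps are soft by comparison.
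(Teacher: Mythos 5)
Your preliminary observation (the uniform bound forces $\Gamma$ to be of the first kind, hence Zariski dense --- essentially \cref{lemma_unbounded inj rad}) and your treatment of the recurrent case both match the paper. The genuine gap is the dichotomy in your second paragraph: the claim that ``a backward geodesic failing to return to any compact set can only escape through a cusp'' is false under the theorem's hypotheses, and the case it ignores is precisely the one the theorem is designed for. Take $\Gamma' \lhd \Gamma$ an infinite-index normal subgroup of a cocompact lattice (e.g.\ a $\mathbb{Z}$-cover of a closed hyperbolic manifold): the injectivity radius on $\Hd/\Gamma'$ is invariant under the deck group, hence descends to the compact base and is uniformly bounded, and $\Gamma'$ has no parabolic elements at all; yet such quotients carry non-trivial $U$-e.i.r.m.s (Babillot--Ledrappier, Sarig, Oh--Pan) for which $\mu$-a.e.\ backward geodesic ray is transient. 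These wandering trajectories enter no cusp and no funnel --- they escape through a geometrically infinite end with injectivity radius bounded between two positive constants --- so your dissipative case cannot be pinned onto a single horospherical orbit, and the hoped-for lemma you flag at the end (``dissipative $U$-ergodic measures are single horospherical orbits'') is simply not true. Your argument would misclassify exactly the measures the theorem asserts are $MA$-quasi-invariant.

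The paper's route through this case is different and is where the injectivity-radius hypothesis actually enters. One sets $I_\mu = \limsup_{t\to\infty} \injM(p(a_{-t}x))$, a $U$-invariant function, hence $\mu$-a.e.\ constant; the uniform bound rules out $I_\mu = \infty$, and $I_\mu = 0$ forces the ray into a cusp via the Margulis thick/thin decomposition, yielding the trivial alternative. When $0 < I_\mu < \infty$ and the ray wanders, one chooses times $t_n \to \infty$ with $\injM(p(a_{-t_n}x)) \in [\epsilon, \kappa]$ (where $\kappa$ is the global bound, playing the role of \cref{Canary's corollary} in the proof of \cref{Main_thm}) and extracts a Chabauty limit $\Sigma$ of the conjugates $a_{-t_n} g \Gamma g^{-1} a_{t_n}$; the lower bound makes $\Sigma$ discrete, and $\Sigma \subseteq \Xi_{g\Gamma}$. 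If $\Sigma$ were not Zariski dense it would be of the second kind, so by (the $d$-dimensional version of) \cref{lemma_unbounded inj rad} the quotient $\Hd/\Sigma$ contains an embedded ball of radius $\kappa + 1$ at bounded distance from the basepoint; geometric convergence then transplants a ball of radius $> \kappa + \tfrac{1}{2}$ into $\M$ near $p(a_{-t_n}x)$ for large $n$, contradicting the uniform bound. Thus $\Sigma$ is Zariski dense even along wandering trajectories, and \cref{Thm_MA-qi} applies. In short: the uniform bound does not exclude non-cuspidal divergence (it cannot); it instead guarantees that every geometric limit arising along such divergence is Zariski dense. Repairing your proof means replacing your second paragraph with this limit-group argument.
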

For $ d = 3 $, the geometric condition above is not satisfied by all finitely generated Kleinian groups, rather only by those of the first kind. The proof of \cref{Main_thm} takes advantage of further geometric properties known to be satisfied by all 3-manifolds with finitely generated fundamental group.

\subsection{Measure decomposition and $ \Gamma $-conformal measures}\label{subsec_Gamma conf meas}

Before proceeding to the main part of the paper, we briefly present the implications of $ MA $-quasi-invariance in terms of the structure of $ U $-e.i.r.m. and the relation to $ \Gamma $-conformal measures on the boundary of $ \Hspace $.
\medskip

A probability measure $ \nu $ on $ S^2 \cong \partial \Hspace $ is called $ \Gamma $-conformal of dimension $ \beta > 0 $ if for any $ \gamma \in \Gamma $
\[ \frac{d(\gamma_* \nu)}{d\nu} = \|\gamma'\|^\beta \]
where $ \gamma_* \nu $ is the push-forward of $ \nu $ by the action of $ \gamma $ on $ S^2 $ and $ \|\cdot\| $ is the operator norm on $ TS^2 $ with respect to the standard Riemannian metric.
Let $ \mu $ be a $ U $-e.i.r.m.~on $ \GmodGamma $ and let $ \tmu $ be its lift to $ G $. A well known consequence of $ MA $-quasi-invariance is a decomposition of $ \tmu $ in $ S^2 \times M \times A \times U  $ coordinates given by 
\begin{equation}\label{eq_decomposion}
d\tmu = e^{\beta t} d\nu dm dt du,
\end{equation}
where $ \nu $ is a $ \Gamma $-conformal measure of dimension $ \beta $ on $ S^2 $ and $ dm,\, dt,\, du $ denote Haar measures on $ M,\, A $ and $ U $ respectively, see \cite{Babillot2004,Burger1990,Ledrappier-Sarig,Sarig2010} and also \cite[\S 5.2]{LL2019Radon}. If $ \delta(\Gamma) $ denotes the critical exponent of the discrete group $ \Gamma $, then all $ \Gamma $-conformal measures are of dimension $ \beta \geq \delta(\Gamma) $ unless $ \Gamma $ is a parabolic cyclic group, see \cite[Theorem 2.19]{Sullivan1987}.

The $ U $-ergodicity of $ \mu $ together with $ MA $-quasi-invariance implies that there exists an $ \alpha \in \R $ satisfying
\[ ma_t.\mu = e^{\alpha t} \mu \]
for any $ ma_t \in MA $. This parameter is related to the dimension of conformality via the identity $ \alpha=\beta-2 $. Note that $ \mu $ is in fact $ M $-invariant since $ M $ is compact and has no non-trivial real-valued characters.

Arguing in the other direction, any $ \Gamma $-conformal measure on $ S^2 $ induces a $ U $-invariant $ MA $-quasi-invariant Radon measure on $ \GmodGamma $, via \eqref{eq_decomposion}.
Consequently, \cref{Main_thm} gives a one-to-one correspondence
\begin{equation*}
\left\{ \text{non-trivial $ U $-e.i.r.m.s} \right\} \rightleftarrows \left\{  \begin{matrix}
\text{non-atomic ergodic} \\
\Gamma \text{-conformal measures}
\end{matrix} \right\}
\end{equation*} 
where ergodicity of the $ \Gamma $-conformal measures is with respect to the entire $ \Gamma $-action on $ S^2 $.

In several setups such classification has been be further explicitized. For instance when $ \Gamma $ is geometrically finite, Sullivan proved there is only one atom-free $ \Gamma $-conformal measure supported on the limit set \cite{Sullivan1984}, hence the induced measure on $ \GmodGamma $ (the Burger-Roblin measure) is the only non-trivial $ U $-e.i.r.m. and is in particular ergodic (this result was originally proven by Burger \cite{Burger1990}, Roblin \cite{Roblin2003} and Winter \cite{Winter2015}, for yet another proof see \cite[Thm.~5.7]{Mohammadi-Oh}).

In the geometrically infinite setting, we refer the reader to relevant results by Bishop and Jones \cite{Bishop-Jones} and Anderson, Falk and Tukia \cite{Anderson-Falk-Tukia} who give explicit constructions and partial classification results applicable also to the context of this paper.

\subsection{Structure of the paper} In the section to follow, \S \ref{Sec_Ends of 3-manifolds}, we present a self-contained introduction to the theory of hyperbolic 3-manifolds with finitely-generated fundamental group and reference those results instrumental to the proof of \cref{Main_thm}. Proofs of \cref{Main_thm} and \cref{Cor_P measures} are detailed in the final section of the paper.

\subsection{Acknowledgments}
The author would like to thank Yair Minsky for several invaluable conversations introducing the theory of 3-manifolds as well as some important insights used in the proof of \cref{Main_thm}. 
The author would also like to thank Peter Sarnak for introducing him to the Tameness Theorem and suggesting its relevance, and to Hee Oh and the anonymous referees for their helpful comments reviewing earlier versions of this manuscript. 

This paper is part of the author's PhD thesis conducted at the Hebrew University of Jerusalem under the guidance of Prof. Elon Lindenstrauss. The author is in debt to Elon for his continuous support and for many helpful suggestions.

\section{Ends of finitely generated Kleinian groups}\label{Sec_Ends of 3-manifolds}

Let $ \Gamma < G $ be a torsion-free Kleinian group and let $ \M = K \backslash \GmodGamma = \Hspace / \Gamma $ be the hyperbolic 3-manifold defined by $ \Gamma $. In this section we will describe a classical decomposition of $ \M $ into a compact set and a collection of unbounded components, or \emph{ends}, capturing the different ways in which a geodesic trajectory can ``escape to infinity''. Following several fundamental results in the theory of 3-manifolds, we will give a very rough but nonetheless useful characterization of these ends which would serve us in the proof of \cref{Main_thm}. Everything in this section is well known and may be found in several sources, see \cite{Canary1993,Canary1996,Marden,Matsuzaki-Taniguchi,Minsky,Thurston,Thurston_book_vol1}.

\subsection{Convex core}
The \emph{limit set} $ \Lambda \subseteq \partial \Hspace = S^2 $ of $ \Gamma $ is defined as the set of accumulation points of the orbit $ \Gamma.z $ in $ \overline{\Hspace} $ for some (and any) point $ z \in \Hspace $. The complement $ \Omega = \partial \Hspace \smallsetminus \Lambda $ is called the \emph{domain of discontinuity} and may equivalently be defined as the maximal set in $ \partial \Hspace $ for which the action of $ \Gamma $ is properly discontinuous.

Let $ \mathrm{hull}(\Lambda) $ denote the convex hull of $ \Lambda $ inside $ \Hspace $. Define the \emph{convex core} of $ \M $ as $ C(\M) = \mathrm{hull}(\Lambda) / \Gamma $. Alternatively, this is the minimal convex submanifold of $ \M $ for which the inclusion induces an isomorphism 
\[ \pi_1(C(\M)) \hookrightarrow \pi_1(\M). \]

\subsection{The thick/thin decomposition and the non-cuspidal part of $ \M $}\label{subsec_noncuspidal}
Assume $ \Gamma $ contains parabolic elements and let $ \xi \in \partial \Hspace $ be a parabolic fixed point. Denote by $ \Gamma_\xi $ the stabilizer subgroup of $ \xi $ in $ \Gamma $. The group $ \Gamma_\xi $ is a free abelian group of rank one or two\footnote{In general, when $ \Gamma $ is not assumed to be torsion-free, this statement only holds virtually.}. There exists an $ r > 0 $ for which any open horoball $ B $ of Euclidean radius less than $ r $ based at $ \xi $ satisfies that $ B / \Gamma_\xi $ is embedded in $ \M $. Such an open set $ B / \Gamma $ in $ \M $ is called a \emph{cusp neighborhood} (of $ \xi $). The rank of the cusp is the rank of $ \Gamma_\xi $.
\medskip

The \emph{injectivity radius}, $ \injM(x) $, at a point $ x \in \M $ is the supremal radius of an embedded hyperbolic ball in $ \M $ around $ x $. Equivalently, it is equal half the length of the shortest homotopically non-trivial loop through $ x $, or in the language of $ K \backslash \GmodGamma $
\begin{equation}\label{eq_injGamma}
\injM(Kg\Gamma) = \frac{1}{2}\min \left\{ d_{\Hspace} (Ke,Kh) : h \in g\Gamma g^{-1}\smallsetminus\{e\} \right\}
\end{equation}
where we identify $ \Hspace \cong K \backslash G $.

Given $ \epsilon > 0 $, we may decompose the manifold $ \M $ into its thick and thin parts, i.e.
\[ \M_{thick(\epsilon)} = \{ x \in \M : \injM (x) \geq \epsilon \} \] and its complement $ \M_{thin(\epsilon)} $.

There exists a constant $ \rho_3 > 0 $ called the Margulis constant corresponding to $ \Hspace $ (or to $ G $) for which given any $ 0 < \epsilon < \rho_3 $ all connected components of $ \M_{thin(\epsilon)} $ are either:
\begin{enumerate}
	\item neighborhoods of a closed geodesic (solid tori homeomorphic to $ D\times S^1 $ where $ D $ is the closed unit disc in $ \R^2 $);
	\item neighborhoods of a rank two cusp (known as solid cusp tori); or
	\item neighborhoods of a rank one cusp (respectively, solid cusp cylinders).
\end{enumerate}
A similar classification of the thin components holds for general $ d $-dimensional hyperbolic manifolds, see e.g.~\cite[Theorem 4.5.6]{Thurston_book_vol1}.

Fix $ 0 < \epsilon < \rho_3 $ and let $ \mathcal{P} \subset \M $ denote the union of all components of $ \M_{thin(\epsilon)} $ corresponding to cusps. Let $ \M^0 = \M \smallsetminus \mathcal{P} $ be the \emph{non-cuspidal part} of the manifold $ \M $.

A manifold $ \M $ is called \emph{geometrically finite} if $ C(\M) \cap \M^0 $ is compact (see equivalent definitions in \cite[Theorem 12.4.5]{Ratcliffe}). In this setting of 3-dimensional hyperbolic manifolds, the above definition is also equivalent to $ \Gamma $ having a finite sided fundamental polyhedron in $ \Hspace $ (see \cite[Theorem 12.4.6]{Ratcliffe}). 
\medskip

The following simple lemma will be of use:
\begin{lemma}\label{lemma_unbounded inj rad}
	Let $ \Gamma < G $ be a Kleinian group of the second kind, i.e.~having limit set $ \Lambda \neq S^2 $, and let $ \M = K \backslash G / \Gamma $. Then
	\[ \sup_{x \in \M} \injM (x) = \infty. \]
\end{lemma}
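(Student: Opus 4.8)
The plan is to use the hypothesis $\Lambda \neq S^2$ to produce an arbitrarily deep embedded region of $\M$ in which the injectivity radius grows without bound. Since the domain of discontinuity $\Omega = \partial\Hspace \smallsetminus \Lambda$ is open and nonempty, I would fix a point $\xi \in \Omega$ and, after conjugating $\Gamma$ by a suitable isometry, assume $\xi = \infty$ in the upper half-space model $\Hspace = \{(w,s) : w \in \C,\ s > 0\}$. Because $\infty \in \Omega$ and $\Gamma$ is torsion-free (the standing assumption of this section), no nontrivial element of $\Gamma$ fixes $\infty$: a parabolic or loxodromic fixed point lies in $\Lambda$, and elliptic elements are excluded by torsion-freeness.

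The crux of the argument is that a sufficiently high horoball at $\xi = \infty$ embeds in $\M$. Concretely, I would invoke the standard fact that $\Gamma$ acts properly discontinuously on $\Hspace \cup \Omega$: taking a small closed neighborhood $C$ of $\xi$ in $\Hspace \cup \Omega$, only finitely many $\gamma \in \Gamma$ satisfy $\gamma C \cap C \neq \emptyset$, and since none of these (other than $e$) fixes $\xi$, one can shrink $C$ to a neighborhood meeting none of its nontrivial translates. This yields a height $T > 0$ such that the horoball $H_T = \{(w,s) : s > T\}$ satisfies $\gamma H_T \cap H_T = \emptyset$ for every $\gamma \in \Gamma \smallsetminus \{e\}$. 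This is precisely where the second-kind hypothesis is essential, and I expect it to be the only step carrying any content, although it is entirely standard.

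Granting this, I would consider the points $z_s = (0, s)$ on the vertical geodesic ray toward $\xi$, letting $s \to \infty$. For any $\gamma \neq e$ the image $\gamma z_s$ lies in $\gamma H_T$, which is disjoint from $H_T$; since $z_s \in H_T$, any path from $z_s$ to $\gamma z_s$ must cross the horosphere $\{s = T\}$, so that $d_\Hspace(z_s, \gamma z_s) \geq d_\Hspace(z_s, \{s = T\}) = \log(s/T)$. Using the identification $\Hspace \cong K\backslash G$ and \eqref{eq_injGamma}, this gives
\[ \injM(K z_s \Gamma) = \tfrac12 \min_{\gamma \neq e} d_\Hspace(z_s, \gamma z_s) \geq \tfrac12 \log(s/T), \]
which tends to infinity as $s \to \infty$; taking the supremum over $s$ yields $\sup_{x\in\M}\injM(x) = \infty$. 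The only routine verifications are the proper-discontinuity argument above and the elementary computation $d_\Hspace(z_s, \{s=T\}) = \log(s/T)$ along the vertical geodesic. No deeper 3-manifold theory enters: the entire content is that a point of discontinuity furnishes an embedded horoball of unbounded depth.
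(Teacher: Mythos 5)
Your argument is essentially the paper's: both proofs extract, from a point $\xi$ in the domain of discontinuity, a precisely invariant region of $\Hspace$ that embeds in $\M$ and on which the injectivity radius is unbounded. The paper takes a small disc $D \subset \Omega$ around $\xi$ with pairwise disjoint $\Gamma$-translates and uses the half-space bounded by $D$; you normalize $\xi = \infty$ and use a horoball $H_T$, adding the explicit estimate $\injM(Kz_s\Gamma) \geq \frac{1}{2}\log(s/T)$ along the vertical ray. This is the same mechanism (your horoball sits inside the paper's half-space), and the quantitative steps check out: a neighborhood of $\infty$ in $\Hspace \cup \Omega$ contains the set $\{p : |p| > R\}$ for some $R$, hence contains $H_R$, so shrinking a precisely invariant neighborhood of $\xi$ does yield a precisely invariant horoball, and the distance computation together with \eqref{eq_injGamma} gives the divergence.

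There is, however, one small but genuine gap: you rule out nontrivial elements of $\Gamma$ fixing $\xi$ by invoking torsion-freeness as ``the standing assumption of this section.'' The lemma is stated for an arbitrary Kleinian group of the second kind, and this generality is actually used: in the proof of \cref{Main_thm} the lemma is applied to $\Sigma$, a geometric limit of the conjugates $a_{-t_n}g\Gamma g^{-1}a_{t_n}$, which is not covered by the standing assumption and is not a priori torsion-free (one can deduce it is, using the lower injectivity-radius bound along the sequence, but neither your argument nor the paper's supplies this). The paper's proof sidesteps the issue by choosing $\xi \in \Omega$ to avoid the countable set of fixed points of elliptic elements of $\Gamma$; with that one-line modification --- which is available to you as well, since parabolic and loxodromic fixed points lie in $\Lambda$ in any case --- your proof works verbatim in the required generality.
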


\begin{proof}
	Denote $ \Omega = S^2 \smallsetminus \Lambda \neq \emptyset $ and choose some point $ \xi \in \Omega $ which is not a fixed point of an elliptic element of $ \Gamma $. By the proper discontinuity of the action of $ \Gamma $ on $ \Omega $ there exists a small disc $ D \subset \Omega $ containing $ \xi $ satisfying
	\[ D \cap \gamma.D = \emptyset \quad \text{for all }\gamma \in \Gamma, \]
	see e.g. \cite[Theorems 12.2.8-9]{Ratcliffe}.
	This implies that the half-space $ H \subset \Hspace $ bounded by $ D $ (in other words, the convex hull of $D$ in $\Hspace$) also has a pairwise disjoint $ \Gamma $-orbit, implying $ H $ is embedded in $ \M $. Since the injectivity radius is clearly not bounded from above on $ H $ this implies the claim.
\end{proof}

\begin{rem}
	This claim is stated for dimension $ d=3 $ but can be identically argued for any discrete $ \Gamma < G_d $ with limit set $ \Lambda \neq S^{d-1} $ and $ d \geq 2 $. 
\end{rem}

\subsection{Relative compact core and relative ends}\label{subsec_RelativeEnds}
A \emph{compact core} of a manifold $ \M $ is an irreducible connected compact submanifold $ C_{cpt} \subseteq \M $ for which the inclusion induces an isomorphism $ \pi_1 (C_{cpt}) \hookrightarrow \pi_1 (\M) $. Given a compact core $ C_{cpt} $ of $ \M $, the \emph{ends} of $ \M $ are the connected components of $ \M \smallsetminus C_{cpt} $. 
For a geometrically finite manifold, the non-cuspidal part of the convex core, i.e.~$ C(\M) \cap \M^0 $, is a compact core. 

In general one has the following theorem:

\begin{thm}[McCullough \cite{McCullough}, Scott \cite{Scott}]
	Let $ \M $ be a hyperbolic 3-manifold with finitely generated fundamental group and let $ \mathcal{P} $ be its cuspidal part, as above. There exists a connected compact core $ C_{rel} \subseteq \M $ satisfying:
	\begin{enumerate}
		\item Each torus component of $ \partial \mathcal{P} $ is a component of $ \partial C_{rel} $,
		\item Each cylinder component of $ \partial \mathcal{P} $ intersects $ \partial C_{rel} $ in a closed annular region (homeomorphic to $ S^1 \times [0,1] $).
	\end{enumerate} 
\end{thm}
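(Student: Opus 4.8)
The plan is to obtain $C_{rel}$ by first invoking the non-relative \textbf{compact core theorem} of Scott and then upgrading it to respect the cuspidal boundary $\partial\mathcal{P}$. The foundational input is Scott's \emph{coherence theorem}, that a finitely generated $3$-manifold group is finitely presented. Granting this, one produces an ordinary compact core as follows: pick based loops representing a finite generating set of $\pi_1(\M)$ and a compact connected submanifold $C_0 \subseteq \M$ containing them, so that $\pi_1(C_0)\to\pi_1(\M)$ is surjective; then repair injectivity by compressions. Whenever $\pi_1(\partial C_0)\to\pi_1(C_0)$ has nontrivial kernel the Loop Theorem and Dehn's Lemma supply an embedded compressing disc, and compressing along it while discarding $\pi_1$-trivial pieces simplifies the boundary. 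Finite presentability is what forces this process to terminate, leaving a connected compact core $C$ with $\pi_1(C)\xrightarrow{\sim}\pi_1(\M)$.

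To pass to $C_{rel}$ I would run a \emph{boundary-adapted} version of this construction, carried out in the non-cuspidal part $\M^0$. Each parabolic subgroup $\Gamma_\xi<\Gamma$ is peripheral and $\pi_1$-injective, so each torus component $T$ of $\partial\mathcal{P}$ (a rank-two cusp cross-section, with $\pi_1\cong\mathbb{Z}^2$) is incompressible, and each infinite-cylinder component (a rank-one cross-section $S^1\times\mathbb{R}$) carries an injective copy of $\mathbb{Z}\cong\Gamma_\xi$. Since $\pi_1(C)=\pi_1(\M)$ carries every $\Gamma_\xi$, the remaining task is geometric: to isotope $C$ so that it absorbs the incompressible tori as entire boundary components and meets each cusp cylinder in a single compact annulus $S^1\times[0,1]$.

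The mechanism is to fix these peripheral pieces at the outset and compress only away from them. Include in the initial compact set a parallel copy of each torus $T$ and a compact sub-annulus of each cusp cylinder, arrange the generating loops and compressing discs to avoid them, and perform all subsequent compressions in the interior of $\M^0$. A rank-two torus $T$, being incompressible, is isotopic to a boundary torus of $C$ across a product region $T\times[0,1]$, so $\partial C$ can be pushed onto $T$, realizing $T$ as a component of $\partial C_{rel}$. For a rank-one cusp the \textbf{Annulus Theorem} straightens the intersection of $\partial C$ with the cylinder into an essential properly embedded annulus; discarding inessential intersection curves by innermost-disc arguments reduces this to a single $S^1\times[0,1]$.

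I expect the principal obstacle to be precisely this last straightening, together with bookkeeping across all cusps at once: one must ensure that the intersection of the core boundary with each rank-one cylinder collapses to one clean annulus rather than a more complicated essential surface, that the rank-two tori are simultaneously realized as boundary components, and that the relative compression process still terminates and leaves a \emph{connected} core. This is where finite presentability is used to bound the complexity and guarantee termination, and where the Loop Theorem, the Annulus Theorem, and the incompressibility of the cusp cross-sections must be orchestrated together, as in the original arguments of Scott and McCullough.
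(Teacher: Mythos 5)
First, note that the paper itself gives no proof of this statement: it is quoted as background, attributed to McCullough \cite{McCullough} and Scott \cite{Scott}, so your sketch can only be measured against the original arguments. Measured that way, your outline of the absolute core theorem has a genuine gap at the compression step. Compressing whenever $\pi_1(\partial C_0)\to\pi_1(C_0)$ has kernel only makes $\partial C_0$ incompressible \emph{from inside} $C_0$, and that does not give injectivity of $\pi_1(C_0)\to\pi_1(\M)$: take for $C_0$ a compact piece of a knot exterior sitting inside $\R^3$ --- its boundary torus is incompressible in $C_0$, yet $\pi_1(C_0)$ dies in $\pi_1(\R^3)=1$; the compressing discs live \emph{outside} $C_0$. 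One must therefore also attach $2$-handles along exterior compressing discs, and then use irreducibility/asphericity of $\M=\Hspace/\Gamma$ to conclude that a submanifold whose boundary is incompressible in $\M$ is $\pi_1$-injective. Worse, when an interior compression disconnects the core one has $\pi_1(C)\cong\pi_1(C_1)\ast\pi_1(C_2)$ and the two pieces only \emph{jointly} surject onto $\pi_1(\M)$, so ``discarding $\pi_1$-trivial pieces'' does not preserve surjectivity --- maintaining surjectivity through disconnecting compressions is precisely the delicate point of Scott's proof, and your sketch passes over it. Finally, finite presentability is not what forces termination: any compression sequence terminates by a complexity count on $\partial C$ (genus tuples, or Kneser--Haken finiteness); coherence is better viewed as a \emph{corollary} of the core theorem than as its foundational input.

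On the relative statement, which is the theorem actually quoted here, your sketch defers the heart of the matter to ``the original arguments of Scott and McCullough,'' and the steps you do assert are themselves unproved claims. That the region between an incompressible torus $T\subset\partial\mathcal{P}$ and $\partial C$ is a product $T\times[0,1]$ requires an argument (one must first arrange a component of $\partial C$ homotopic to $T$ and then invoke Waldhausen-type results for incompressible surfaces in irreducible manifolds); and the Annulus Theorem does not by itself reduce $\partial C\cap(\text{cusp cylinder})$ to a \emph{single} essential annulus --- one must eliminate excess essential annuli, do this simultaneously at every cusp, and keep the resulting core connected with $\pi_1(C_{rel})\to\pi_1(\M)$ an isomorphism throughout. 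That simultaneous bookkeeping is exactly the content of McCullough's paper \cite{McCullough}, whose actual proof proceeds by a handle-theoretic modification of a Scott core relative to a prescribed compact subsurface of the boundary rather than by the isotopy-plus-Annulus-Theorem scheme you propose. In short: your proposal is a reasonable roadmap (absolute core, then a relative upgrade respecting $\partial\mathcal{P}$), but as a proof it fails in two concrete places --- the compression scheme as stated achieves neither injectivity nor demonstrably preserved surjectivity, and the relative upgrade is asserted rather than carried out.
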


The submanifold $ C_{rel} $ is called the \emph{relative compact core} of $ \M $. The connected components of $ \M^0 \smallsetminus C_{rel} $ are called the \emph{relative ends} of $ \M $. 
\medskip

\begin{figure}[h]
	\includegraphics[scale=0.4]{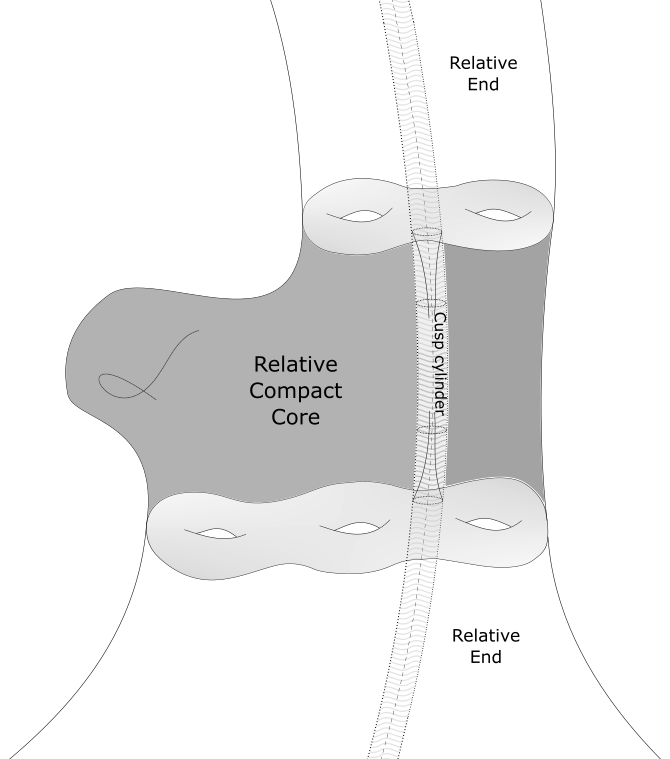}
	\caption{Two relative ends paired by a cusp cylinder.}
	\label{fig:Ends}
\end{figure}
\medskip

\begin{dfn}
	\begin{enumerate}[leftmargin=*]
		\item An open set $ V $ is called a sub-neighborhood of a relative end $ E $ if $ V \subseteq E $ and $ E \smallsetminus V $ is precompact in $ \M $.
		\item A relative end $ E $ of $ \M $ is called \emph{geometrically finite} if it has a sub-neighborhood disjoint from the convex core. A relative end is called \emph{geometrically infinite} otherwise.
\end{enumerate}\end{dfn}
\noindent Note that $ \M $ is geometrically finite if and only if so are all of its relative ends.

\begin{rem}\
	\begin{enumerate}[leftmargin=*]
		\item One considers this refined notion of a relative end, as opposed to taking the connected components of $ \M \smallsetminus C_{cpt} $ (for any compact core $ C_{cpt} $), due to the fact that two relative ends might be connected together by a common cusp cylinder despite having sub-neighborhoods arbitrarily distant from each other (see \cref{fig:Ends}).
		\item The cuspidal neighborhoods in $ \mathcal{P} $ and the relative compact core are not uniquely defined (and are clearly dependent on $ \epsilon $). We will refer to these notions in definite form only to indicate our choices are fixed.
	\end{enumerate}
\end{rem}
\medskip

The following is well known (see e.g.~\cite{Minsky}):
\begin{lemma}\label{lemma_infinite end inside convex core}
	Let $ \M $ be a hyperbolic 3-manifold with finitely generated fundamental group and let $ E $ be a geometrically infinite relative end of $ \M $. Then $ E $ has a sub-neighborhood completely contained inside the convex core.
\end{lemma}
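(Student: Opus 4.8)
The plan is to produce, for a geometrically infinite relative end $E$, a \emph{connected} sub-neighborhood $V\subseteq E$ that is disjoint from the boundary $\partial C(\M)$ of the convex core, and then to invoke a connectedness dichotomy: such a $V$ must lie either in the interior of $C(\M)$ or in its complement, and the latter is exactly the defining property of a geometrically finite end. Since $E$ is assumed geometrically infinite, only the former can occur, which is the assertion.

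The one substantial geometric input is that $\partial C(\M)\cap\M^0$ is compact. If $\Gamma$ is of the first kind this is vacuous, as then $C(\M)=\M$ and every sub-neighborhood trivially lies in the convex core; so assume $\Omega\neq\emptyset$. Finite generation of $\Gamma$ feeds into the Ahlfors Finiteness Theorem \cite{Marden}, by which the conformal boundary $\Omega/\Gamma$ is a finite union of surfaces of finite type; the nearest-point retraction identifies $\partial C(\M)$ with $\Omega/\Gamma$, and the non-compact ends of this surface are precisely the rank-one cusps, which run into the solid cusp cylinders comprising part of $\mathcal{P}$. Hence intersecting with $\M^0=\M\smallsetminus\mathcal{P}$ truncates these cusps and leaves a compact surface-with-boundary. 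I would fix once and for all a compact set $Q\subseteq\M$ containing $\partial C(\M)\cap\M^0$.

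To build $V$, I would first appeal to the Tameness Theorem of Agol \cite{Agol} and Calegari-Gabai \cite{Calegari-Gabai}: together with the relative compact core $C_{rel}$ of McCullough \cite{McCullough} and Scott \cite{Scott}, it shows each relative end $E$ is homeomorphic to $S\times[0,\infty)$ for a connected compact surface $S$, so in particular $E$ is one-ended. Consequently, for the fixed compact $Q$ the open set $E\smallsetminus Q$ has a unique unbounded connected component $V$, and $E\smallsetminus V$ is precompact; thus $V$ is a connected sub-neighborhood of $E$. By construction $V\cap Q=\emptyset$, and since $V\subseteq E\subseteq\M^0$ avoids both the cuspidal part $\mathcal{P}$ and the set $\partial C(\M)\cap\M^0\subseteq Q$, we obtain $V\cap\partial C(\M)=\emptyset$.

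Finally I would run the dichotomy. As $C(\M)$ is closed with topological frontier $\partial C(\M)$, the complement $\M\smallsetminus\partial C(\M)$ is the disjoint union of the two open sets $\mathrm{int}\,C(\M)$ and $\M\smallsetminus C(\M)$. Being connected and disjoint from $\partial C(\M)$, the set $V$ lies entirely in one of them. If $V\subseteq\M\smallsetminus C(\M)$, then $E$ possesses a sub-neighborhood disjoint from the convex core, i.e.~$E$ is geometrically finite, contrary to hypothesis; hence $V\subseteq\mathrm{int}\,C(\M)\subseteq C(\M)$, which is what we wanted. I expect the main obstacle to be the first input --- the compactness of $\partial C(\M)\cap\M^0$ --- since everything else is soft point-set topology once tameness is granted; it is exactly this compactness, resting on Ahlfors finiteness and the identification of the ends of $\partial C(\M)$ with cusps, that forces a geometrically infinite end to remain inside the convex core.
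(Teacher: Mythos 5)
Your proposal is correct and follows essentially the same route as the paper: Ahlfors finiteness yields compactness of $\partial C(\M)\cap\M^0$, and the unique unbounded component of $E$ minus this compact set is a sub-neighborhood which, being connected, disjoint from $\partial C(\M)$, and belonging to a geometrically infinite end, must lie inside $C(\M)$. The only differences are cosmetic: you justify the uniqueness of the unbounded component explicitly via tameness ($E\cong S\times[0,\infty)$), a point the paper's proof asserts without comment, and you obtain the compactness of $\partial C(\M)\cap\M^0$ through the nearest-point retraction onto $\Omega/\Gamma$ rather than through the finite hyperbolic area of $\partial C(\M)$.
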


\begin{proof}
	It is a consequence of the Ahlfors finiteness theorem that the boundary of the convex core, $ \partial C(\M) $, has finite hyperbolic area (see \cite[Theorems 4.93 and 4.108]{Kapovich}). Therefore the only non-compact components of $ \partial C(\M) $ are those tending to infinity through a cusp. In particular $ \partial C(\M) \cap \M^0 $ is compact. Hence given a relative end $ E \subseteq \M^0 $ we are ensured that exactly one component of $ E \smallsetminus \partial C(\M) $ is unbounded. In the case that $ E $ is geometrically infinite, this unbounded component is a sub-neighborhood of $ E $ entirely contained inside $ C(\M) $.
\end{proof}

Note that whenever $ \M $ has a relative compact core, $ \M $ admits a finite number of relative ends, as $ \partial C_{rel} $ is compact.

\subsection{Tameness}
The manifold $ \M $ is called \emph{topologically tame} if it is homeomorphic to the interior of a compact 3-manifold with boundary. The following is a fundamental result in the theory of 3-manifolds:
\begin{thm}[The Tameness Theorem]
	All complete hyperbolic 3-manifolds with finitely generated fundamental group are topologically tame.
\end{thm}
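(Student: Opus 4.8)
The plan is to reduce the global statement to a statement about the individual ends of $\M$ and then to control the geometry of each one. By the relative compact core theorem of McCullough and Scott quoted above, $\M^0$ carries a relative compact core $C_{rel}$ with only finitely many relative ends, and $\M$ is the interior of a compact manifold precisely when each such end admits a product structure $S \times [0,\infty)$ for a compact surface $S$. So it suffices to produce such a product chart near every end. I would first dispose of the geometrically finite ends: by \cref{lemma_infinite end inside convex core} and the compactness of $\partial C(\M) \cap \M^0$, a geometrically finite end has a sub-neighborhood disjoint from the convex core, and there the nearest-point retraction onto $\partial C(\M)$ together with the exponential flaring of the boundary yields the desired product structure directly. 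The entire difficulty is therefore concentrated in the geometrically infinite ends, those which by \cref{lemma_infinite end inside convex core} have a sub-neighborhood trapped inside $C(\M)$.

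For a geometrically infinite end $E$ I would follow the geometric strategy pioneered by Thurston, Bonahon and Canary: exhibit a sequence of maps $f_n : S \to \M$ of a fixed surface $S$, each homotopic to the boundary component of $C_{rel}$ facing $E$, whose images exit every compact set of $E$ while keeping \emph{uniformly bounded area}. Bounded topology together with bounded area is exactly what forces tameness, because a surface that is $\mathrm{CAT}(-1)$ away from a finite geodesic link obeys the Gauss--Bonnet bound $\mathrm{area} \leq -2\pi\chi(S)$ independently of $n$, and a bounded-diameter argument then confines the end to a product region. The task splits into three parts: that such surfaces (i) exist, (ii) genuinely leave every compact set, and (iii) are eventually embedded and cut off a product. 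The classical realization by pleated surfaces succeeds when $\pi_1(\M)$ is freely indecomposable, which is Bonahon's theorem; the obstruction that resisted these methods — and motivates what follows — is the freely decomposable (free-product) case, where one cannot in general realize the escaping curves by controlled pleated surfaces.

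The technical heart, and the step I expect to be the main obstacle, is the construction in (i), namely the \emph{shrinkwrapping} of Calegari--Gabai (with Agol's end-reduction and normal-surface hierarchy as an alternative route). Using that $E$ is geometrically infinite one first extracts a locally finite family of disjoint simple closed geodesics $\delta_k$ escaping into $E$. One then homotopes the boundary surface to a representative pulled as tight as possible subject to wrapping minimally around the $\delta_k$; the output is smooth and $\mathrm{CAT}(-1)$ off the $\delta_k$ and so inherits the Gauss--Bonnet area bound above. Proving that a shrinkwrapped representative \emph{exists and is regular} in each homotopy class is the real crux: it requires a delicate minimal-surface and geometric-measure-theory argument — approximation by smooth area minimizers in metrics perturbed near the geodesics, followed by a limit with curvature and regularity control at $\delta_k$. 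Once this existence-and-regularity statement is secured, monotonicity of the construction along the escaping $\delta_k$ supplies the exiting sequence demanded in (ii).

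Finally, for (iii) I would feed these bounded-area exiting surfaces into an intersection-and-interpolation argument: the bounded-diameter lemma for $\mathrm{CAT}(-1)$ surfaces of bounded area, combined with Waldhausen's theorem that homotopic incompressible surfaces in an irreducible $3$-manifold are isotopic, shows that consecutive surfaces in the sequence cobound product regions $S \times [0,1]$, and assembling these gives the product end $S \times [0,\infty)$. The parabolic locus is carried throughout in the relative (pared) category, so that the annular and toral pieces of $\partial C_{rel}$ prescribed by the McCullough--Scott theorem match up correctly. Gluing the resulting product ends onto $C_{rel}$ then realizes $\M$ as the interior of a compact $3$-manifold, which is exactly the assertion of the theorem.
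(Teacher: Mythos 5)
Begin with a point of orientation: the paper does not prove this theorem at all --- it is imported as a black box, with the proof attributed to Agol \cite{Agol} and Calegari--Gabai \cite{Calegari-Gabai}. So your proposal is not competing with an internal argument but with two long and difficult papers, and judged on those terms it is a roadmap of the Calegari--Gabai strategy rather than a proof. The decisive gap is the one you flag yourself: step (i), the existence and regularity of shrinkwrapped representatives ($\mathrm{CAT}(-1)$ away from the escaping geodesics $\delta_k$, hence subject to the Gauss--Bonnet area bound), \emph{is} the content of \cite{Calegari-Gabai} --- a sustained minimal-surface and geometric-measure-theory argument with curvature and regularity control at the $\delta_k$ --- and the phrase ``once this existence-and-regularity statement is secured'' concedes that the theorem is being assumed rather than derived. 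The same applies to the preliminary step of producing a locally finite family of escaping simple closed geodesics that is homotopically substantial enough for shrinkwrapping to yield information: in \cite{Calegari-Gabai} one needs the complement of the $\delta_k$ to be $2$-incompressible, and arranging this requires a genuine argument, not just geometric infiniteness of the end.

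There is also a concrete mathematical misstep in step (iii). You invoke Waldhausen's theorem that homotopic incompressible surfaces in an irreducible $3$-manifold are isotopic, in order to make consecutive surfaces in your sequence cobound product regions. But that theorem concerns \emph{embedded, incompressible} surfaces, whereas (a) shrinkwrapped surfaces are mapped-in, not embedded, and (b) in the freely decomposable case --- which you correctly identify as the only case resisting Bonahon's pleated-surface argument --- the surface facing a geometrically infinite end is \emph{compressible} in $\M$, so incompressibility is precisely what is unavailable; your step (iii) thus fails exactly where the theorem is hard. Calegari--Gabai circumvent this via $2$-incompressibility relative to the escaping geodesics together with Canary's covering theorem, proving tameness in a suitable cover (or via end reductions, in Agol's approach \cite{Agol}) and then descending; no direct interpolation argument of the kind you describe works in $\M$ itself. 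Note also that nothing like \cref{Canary's corollary} may be used in such a proof, since Canary's injectivity-radius bound on $C(\M)$ presupposes tameness and its use would be circular. In short: the skeleton you draw is the correct one, but everything that makes the Tameness Theorem a theorem lies in the two steps you defer, and the paper is right to cite the result rather than prove it.
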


The above has been a long standing conjecture of Marden, finally solved in 2004 independently by Agol \cite{Agol} and Calegari-Gabai \cite{Calegari-Gabai}.

The theory of finitely generated Kleinian groups is rich and powerful. In particular much is known regarding the structure and classification of the ends of tame manifolds. We will only make use of a small piece of this theory, encapsulated in the following theorem of Canary \cite[Corollary A]{Canary1996}. 
\begin{thm}\label{Canary's corollary}
	Let $ \M $ be a topologically tame hyperbolic 3-manifold, then there exists a constant $ \kappa > 0 $ such that $ \injM (x) < \kappa $ for any $ x \in C(\M) $.
\end{thm}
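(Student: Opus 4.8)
The plan is to localize the problem on the convex core using the decomposition of \S\ref{Sec_Ends of 3-manifolds} and then bound the injectivity radius end by end, the geometrically infinite ends being the crux. First I would observe that it suffices to bound $\injM$ on the non-cuspidal part $C(\M)\cap\M^0$: every point of the cuspidal part $\mathcal{P}$ lies in $\M_{thin(\epsilon)}$ and hence satisfies $\injM(x)<\epsilon$, so such points are harmless. Since $\pi_1(\M)$ is finitely generated, the McCullough--Scott theorem gives a relative compact core $C_{rel}$, and $\M^0\smallsetminus C_{rel}$ has only finitely many relative ends. On the compact set $C(\M)\cap C_{rel}$ the function $\injM$ (which is $1$-Lipschitz) is bounded. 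It thus remains to bound $\injM$ on $C(\M)\cap E$ for each relative end $E$. If $E$ is geometrically finite, then by definition it has a sub-neighborhood disjoint from the convex core, so $C(\M)\cap E$ is precompact and $\injM$ is bounded there. The whole difficulty concentrates on the geometrically infinite ends.

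So let $E$ be a geometrically infinite relative end. By \cref{lemma_infinite end inside convex core} a sub-neighborhood of $E$ lies inside $C(\M)$, and I must therefore produce short essential loops arbitrarily deep inside $E$. Here I would invoke geometric tameness of topologically tame manifolds (Thurston--Bonahon--Canary): the end $E$ is simply degenerate, and there is a sequence of pleated (simplicial hyperbolic) surfaces $f_n\colon (S,\sigma_n)\to\M$, all homotopic to the boundary component $S$ of $C_{rel}$ facing $E$, whose images exit $E$ and sweep it out, in the sense that every point sufficiently deep in $E$ lies within a uniform distance $d_0$ of some $f_n(S)$. Each $f_n$ is path-isometric and $\pi_1$-injective.

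Next I would bound $\injM$ along each pleated surface by a constant depending only on the fixed topological type of $S$. By Gauss--Bonnet the intrinsic hyperbolic area of $(S,\sigma_n)$ equals $2\pi|\chi(S)|$; since an embedded hyperbolic disc of radius $R$ has area $2\pi(\cosh R-1)$, the intrinsic injectivity radius of $(S,\sigma_n)$ is bounded above by a constant $c(S)$ independent of $n$. A shortest essential geodesic loop through a point $p\in S$ then has length $\le 2c(S)$, and because $f_n$ preserves lengths of paths and is $\pi_1$-injective, its image is a homotopically non-trivial loop in $\M$ of the same length through $f_n(p)$; hence $\injM(f_n(p))\le c(S)$ for all $n$ and $p$. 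Using that $\injM$ is $1$-Lipschitz and that every deep point of $E$ is within $d_0$ of some surface, I obtain $\injM\le c(S)+d_0$ on a sub-neighborhood of $E$, while the complementary part of $C(\M)\cap E$ is precompact and again controlled. Taking $\kappa$ to be the maximum of the finitely many bounds over $C_{rel}$ and the ends, together with $\epsilon$ for the cusps, finishes the argument.

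The main obstacle is the input invoked in the second step: the passage from topological tameness to geometric tameness and, in particular, the existence of a family of pleated surfaces that simultaneously exit and uniformly sweep out a simply degenerate end. This is exactly the deep content of the Thurston--Bonahon--Canary theory, and is where the Tameness Theorem enters; everything else is Gauss--Bonnet bookkeeping and compactness.
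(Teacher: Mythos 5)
You have attempted to reprove a result that the paper itself does not prove: it quotes the statement as Corollary A of Canary's covering-theorem paper \cite{Canary1996}, so your sketch must be measured against the actual argument in the literature. Your reductions are fine: points of $\mathcal{P}$ are $\epsilon$-thin, $C(\M)\cap C_{rel}$ is compact, a geometrically finite end meets $C(\M)$ in a precompact set, and the problem indeed concentrates on the geometrically infinite relative ends, where (by \cref{lemma_infinite end inside convex core}) a sub-neighborhood lies in $C(\M)$. For \emph{incompressible} such ends your argument is the standard Thurston--Bonahon one and works. The genuine gap is the sentence ``Each $f_n$ is path-isometric and $\pi_1$-injective.'' Pleated and simplicial hyperbolic surfaces preserve lengths of paths, but they are $\pi_1$-injective only when the surface facing the end is incompressible in $\M$. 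A finitely generated Kleinian group can perfectly well have a compressible geometrically infinite end --- e.g.\ degenerate handlebodies arising from geometrically infinite free groups on the boundary of Schottky space --- and there the shortest $\sigma_n$-essential loop through $p$ may bound a disc in $\M$. Gauss--Bonnet produces loops that are short and essential \emph{in the surface}; it says nothing about whether they survive in $\pi_1(\M)$, and without that your bound on $\injM(f_n(p))$ evaporates exactly in the hardest case.

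This compressible case is not a technicality to be waved at: it is the core content of the result and the reason the paper cites Canary rather than deriving the bound from ``Gauss--Bonnet bookkeeping and compactness.'' In \cite{Canary1993} geometric tameness of compressible tame ends is established via a branched-covering trick that renders the end incompressible while keeping geometric control, and the injectivity-radius bound on the convex core in \cite{Canary1996} rests on the covering theorem of that paper. A secondary point: the uniform constant $d_0$ in your sweep-out claim requires Canary's continuous interpolating family of simplicial hyperbolic surfaces, not merely a sequence exiting the end; you did flag the sweep-out as deep input, but you attributed all of the difficulty to producing the surfaces, whereas the failure of $\pi_1$-injectivity is a separate obstruction your proposal never addresses. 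To repair the proof you must either restrict the statement to manifolds whose geometrically infinite ends are incompressible, or import Canary's branched-cover/covering-theorem machinery wholesale --- at which point you have reproduced his Corollary A rather than given an independent proof of it.
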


It is worth stressing that $ C(\M) $ is not necessarily compact (only whenever $ \M $ is convex cocompact in which case the statement is trivially true). Theorem~\ref{Canary's corollary} plays a key role in the proof of \cref{Main_thm}.

\section{Proof of \cref{Main_thm}}

\begin{thmnm}[\ref{Main_thm}]
	Let $ \Gamma < G $ be a Zariski dense finitely generated Kleinian group. Then any non-trivial $ U $-e.i.r.m.~on $ \GmodGamma $ is $ MA $-quasi-invariant.
\end{thmnm}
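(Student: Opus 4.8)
The plan is to verify the hypothesis of \cref{Thm_MA-qi} for every non-trivial $U$-e.i.r.m.\ $\mu$, namely that $\Xi_x$ contains a Zariski dense subgroup of $G$ for $\mu$-a.e.\ $x$. The key preliminary observation is that $x\mapsto\Xi_x$ is constant on $U$-orbits: for $x=g\Gamma$ and $u\in U$ one has $a_{-t}\,ug\Gamma g^{-1}u^{-1}\,a_{t}=(a_{-t}ua_{t})\,\bigl(a_{-t}g\Gamma g^{-1}a_{t}\bigr)\,(a_{-t}ua_{t})^{-1}$, and since $a_{-t}ua_{t}\to e$ the conjugating factor disappears in the limit, giving $\Xi_{ux}=\Xi_x$. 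Hence any condition imposed on $\Xi_x$ defines a $U$-invariant event, which by ergodicity has measure $0$ or $1$. It therefore suffices to prove the dichotomy: \emph{either $\mu$ is trivial, or $\Xi_x$ contains a Zariski dense subgroup for a.e.\ $x$} --- in the latter case \cref{Thm_MA-qi} immediately yields $MA$-quasi-invariance.

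All the geometric input is extracted from the backward ray $t\mapsto Ka_{-t}g\Gamma$ in $\M$. Because $U$ is the strong unstable horospherical subgroup, the points of a common $U$-orbit are backward-asymptotic, so this ray converges to a boundary point $\eta(x)$ --- the basepoint of the unstable horosphere $Ux$ --- and $\eta$ is $U$-invariant. Recalling that $a_{-t}g\Gamma g^{-1}a_{t}$ is the stabilizer of $a_{-t}x$, formula \eqref{eq_injGamma} shows that a closed geodesic loop of length $\ell$ through $a_{-t}x$ corresponds to an element of $a_{-t}g\Gamma g^{-1}a_{t}$ at distance $\ell$ from the identity coset in $\Hspace$; loops whose lengths stay in a fixed compact subinterval of $(0,\infty)$ thus contribute, in the limit $t\to\infty$, non-trivial elements of $\Xi_x$. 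I would now partition according to the $U$-invariant event $W=\{x:\eta(x)\in\Omega\ \text{or}\ \eta(x)\ \text{is a parabolic fixed point}\}$.

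If $\mu(W)=1$ then $\mu$ is trivial. Indeed, when $\eta(x)\in\Omega$ an entire horoball based there embeds (as in the proof of \cref{lemma_unbounded inj rad}), and the horosphere $Ux$ is a closed $U$-orbit (a wandering plane) escaping to infinity through a geometrically finite end; when $\eta(x)$ is parabolic the horosphere $Ux$ is a closed orbit bounding a cusp (a torus if the rank is two, a cylinder if it is one). In all of these cases the relevant $U$-orbits are closed and their union has a Hausdorff orbit space, so $U$-ergodicity forces $\mu$ to be supported on a single such orbit; this is precisely a trivial measure of type (1) or (2).

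It remains to handle $\mu(W)=0$, where $\eta(x)\in\Lambda$ is a non-parabolic limit point for a.e.\ $x$, and to show that then $\Xi_x$ is Zariski dense. If the backward ray returns to a fixed compact set $Q\subset\M$ along some $t_n\to\infty$, write $a_{-t_n}g\Gamma=h_n\Gamma$ with $h_n\to h$; then $a_{-t_n}g\Gamma g^{-1}a_{t_n}=h_n\Gamma h_n^{-1}\to h\Gamma h^{-1}$, so $\Xi_x$ contains a conjugate of $\Gamma$, which is Zariski dense by hypothesis. If instead the ray leaves every compact set, then --- since $\eta(x)\in\Lambda$ excludes a geometrically finite end and $\eta(x)$ non-parabolic excludes escape into a cusp --- it must exit through a geometrically infinite relative end $E$; by \cref{lemma_infinite end inside convex core} the ray eventually lies in $C(\M)$, whence $\injM<\kappa$ along it by \cref{Canary's corollary}. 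The crux of the proof, and the step I expect to be the main obstacle, is to promote the resulting geometric limit from non-trivial to \emph{non-elementary} (equivalently, Zariski dense in $G$). Here I would invoke the Tameness Theorem to identify $E\cong S\times[0,\infty)$ with $\chi(S)<0$ --- a disc, sphere, annulus or torus end being ruled out as compressible, geometrically finite, or cuspidal --- together with the structure of simply degenerate ends, which provides a sequence of homotopically distinct closed geodesics of uniformly bounded length exiting $E$. Their pairwise non-commuting, bounded-length representatives persist in the geometric limit and generate a non-elementary, hence Zariski dense, subgroup contained in $\Xi_x$. This establishes the dichotomy and, for non-trivial $\mu$, the $MA$-quasi-invariance asserted in the theorem.
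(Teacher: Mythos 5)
Your overall architecture tracks the paper's: the same trichotomy on the asymptotic behavior of the backward ray (cusp / domain of discontinuity / non-parabolic limit point), the same treatment of the two trivial cases and of the recurrent case via $a_{-t_n}g\Gamma g^{-1}a_{t_n}\to h\Gamma h^{-1}$, and the same use of \cref{Canary's corollary} to place the wandering ray inside $C(\M)$ with $\injM<\kappa$. But your resolution of the step you yourself flag as the crux does not work, for two reasons. First, the parenthetical ``non-elementary (equivalently, Zariski dense in $G$)'' is false in $\PSL_2(\C)$: a Fuchsian group is non-elementary yet its Zariski closure is a conjugate of $\PSL_2(\R)$, a proper algebraic subgroup, so producing a non-elementary subgroup of $\Xi_x$ does not verify the hypothesis of \cref{Thm_MA-qi}. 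The correct dichotomy is ``Zariski dense versus second kind'' (limit set omitting an open set of $S^2$, which covers both the elementary and the circle-preserving cases), and this is exactly the dichotomy the paper exploits. Second, the persistence claim is unjustified: an element of $\Xi_{g\Gamma}$ must arise as a loop of \emph{uniformly bounded length based at the ray points} $p(a_{-t_n}g\Gamma)$, whereas the bounded-length closed geodesics exiting a simply degenerate end (which do exist, via pleated or simplicial hyperbolic surfaces and Bers constants) need not pass within bounded distance of your ray --- cross-sections of the end can have unbounded diameter even though $\injM<\kappa$ there. Even granting proximity, you would need two non-commuting short elements at the \emph{same} basepoints whose non-commutation survives the limit, which limits of non-commuting elements need not do. Canary's bound by itself only guarantees \emph{one} short loop per basepoint, i.e.\ a non-trivial limit, which is precisely why this step is the obstacle.

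The paper's actual argument avoids constructing explicit generators altogether: it passes to a geometric limit $\Sigma=\lim a_{-t_n}g\Gamma g^{-1}a_{t_n}$ (discrete because $I_\mu>\epsilon$ gives a lower injectivity-radius bound along the subsequence --- a point your setup also omits, since you never extract times with $\injM$ bounded below, as needed for discreteness of the limit), and argues by contradiction: if $\Sigma$ is not Zariski dense it is of the second kind, so by \cref{lemma_unbounded inj rad} the quotient $\Hspace/\Sigma$ has unbounded injectivity radius; geometric convergence then plants embedded balls of radius $>\kappa+\frac{1}{2}$ within distance $R$ of the ray points, whose centers lie outside $C(\M)$ by \cref{Canary's corollary}; the connecting arcs must cross $Y=\partial C(\M)\cap\overline{(\M^0)^{(R)}}$, which is compact by Ahlfors finiteness (as in \cref{lemma_infinite end inside convex core}), contradicting the assumption that the ray is wandering. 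Note also that the paper first reduces to torsion-free $\Gamma$ via Selberg's lemma (lifting and projecting $U$-e.i.r.m.s along the finite cover), which your proposal skips but needs in order to have a manifold to which tameness, the relative compact core, and Canary's theorem apply. To repair your write-up you would essentially have to replace your degeneration-of-ends construction with this second-kind contradiction argument, or else supply a genuinely new proof that the limit cannot preserve a circle and that exiting geodesics can be rechanneled through the ray's basepoints.
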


\begin{proof}

We claim it suffices to prove the theorem under the additional assumption that $ \Gamma $ is torsion-free. Indeed, by Selberg's lemma (see e.g.~\cite{Matsuzaki-Taniguchi,Ratcliffe}) any finitely generated Kleinian group $ \Gamma $ contains a finite-index torsion-free normal subgroup $ \Gamma' \lhd \Gamma $. Hence there exists a covering map $ \pi $ from $ \GmodGamma' $ to $ \GmodGamma $, with finite fiber. This map is equivariant with respect to the left action of $ G $ on both spaces.

Any $ U $-e.i.r.m.~defined on $ \GmodGamma' $ can be pushed forward to a $ U $-e.i.r.m.~on $ \GmodGamma $, where local finiteness of the projected measure follows from the map $ \pi $ being proper. On the other hand, we claim that any $ U $-e.i.r.m.~$ \mu $ on $ \GmodGamma $ may be lifted (possibly in more than one way) to a $ U $-e.i.r.m.~$ \tilde{\mu} $ on $ \GmodGamma' $ satisfying $ \pi_* \tilde{\mu} = \mu $. Indeed, one can always take $ \nu $ to be the lift of $ \mu $ to a $ \Gamma / \Gamma' $-invariant and $ U $-invariant Radon measure on $ \GmodGamma' $ by defining 
\[ \nu(B) = \sum_{\xi \in \Gamma / \Gamma'} \mu(\pi(B\cap F\xi))  \]
for any measurable set $ B \subseteq \GmodGamma' $, where $ F \subseteq \GmodGamma' $ is a measurable fundamental domain with respect to the action of the group $ \Gamma / \Gamma' $. Whenever $ \nu $ is $ U $-ergodic take $ \tilde{\mu} = \frac{1}{|\Gamma/ \Gamma'|}\nu $. Otherwise, let $ Q \subseteq \GmodGamma' $ be a $ U $-invariant set of positive $ \nu $-measure and consider the finite partition of $ \pi^{-1}(\pi(Q)) $ generated by the $ \Gamma / \Gamma' $-translates of $ Q $, that is, the set of atoms of the algebra generated by translates of $ Q $ in $ \pi^{-1}(\pi(Q)) $. Let $ Q_0 $ be an atom of the partition having positive $ \nu $-measure. Then $ Q_0 $ is $ U $-invariant as an intersection of $ U $-invariant sets (since the actions of $ \Gamma / \Gamma' $ and $ U $ commute). Moreover, all $ \Gamma / \Gamma' $-translates of $ Q_0 $ are mutually disjoint meaning that $ Q_0 $ projects injectively onto $ \pi(Q_0) $. The set $ \pi(Q_0) $ is both $ U $-invariant and of positive $ \mu $-measure, hence by the ergodicity of $ \mu $ it is $ \mu $-conull. The measure 
\[ \tilde{\mu}= \nu |_{Q_0} = \left((\pi|_{Q_0})^{-1}\right)_* \mu  \]
is an ergodic lift of $ \mu $, as claimed.

Equivariance of $ \pi $ ensures both that the push forward of an $ MA $-quasi-invariant measure is $ MA $-quasi-invariant and that a partially periodic horosphere (i.e.~ one which is based at a parabolic fixed point) is projected to a partially periodic horosphere.
Lastly, recall that the limit set of $ \Gamma' $ is equal to the limit set of $ \Gamma $ (true for all normal subgroups, see e.g. \cite[Theorem 12.2.14]{Ratcliffe}) therefore horospheres based outside the limit set are projected by $ \pi $ to those based outside the limit set.  

Hence proving the theorem for $ \GmodGamma' $ would ensure that any $ U $-ergodic lift of a $ U $-e.i.r.m.~$ \mu $ on $ \GmodGamma $, is either a trivial measure or $ MA $-quasi-invariant. Projecting back to $ \GmodGamma $ would imply the same dichotomy for $ \mu $, concluding the reduction claim.
\medskip

Let $ \Gamma < G $ be a finitely generated torsion-free Kleinian group and let $ \mu $ be any $ U $-e.i.r.m.~on $ \GmodGamma $. Let $ \M = K \backslash \GmodGamma = \Hspace / \Gamma $ be the corresponding topologically tame hyperbolic 3-manifold. Let $ p: \GmodGamma \to \M $ denote the projection $ g\Gamma \mapsto Kg\Gamma $.

Recall that for any $ u \in U $ and $ g \in G $
\[ d_G(a_{-t}g,a_{-t}ug) \to 0 \text{ as } t \to \infty, \]
where $ d_G $ denotes a right-invariant metric on $ G $. Consequently, the function
\[ g\Gamma \mapsto \limsup_{t \to \infty} \injM (p(a_{-t}g\Gamma)) \]
is $ U $-invariant. This function is clearly also measurable and by ergodicity it is equal to a constant $ I_\mu \in [0,\infty] $ $ \mu $-a.s. We will consider three possible cases: 
\[ I_\mu = 0 \;,\; I_\mu = \infty \text{ and } 0 < I_\mu <\infty. \]

Whenever $ I_\mu = 0 $ then for $ \mu $-a.e.~$ x=g\Gamma \in \GmodGamma $ the injectivity radius along the geodesic ray $ \left( p(a_{-t}x) \right)_{t \geq 0} $ tends to 0. This can only happen whenever the geodesic is directed toward a cusp. Indeed for all large $ t $ the injectivity radius is less than $ \rho_3 $, the Margulis constant (see \S \ref{subsec_noncuspidal}), ensuring the geodesic is trapped inside one connected component of $ \M_{thin(\rho_3)} $. By the classification of thin components presented in \S \ref{subsec_noncuspidal} either the geodesic is trapped inside a compact toral neighborhood of a short geodesic or it is trapped inside a cusp neighborhood. But since the injectivity radius tends to 0 the former possibility is excluded implying the geodesic tends to infinity through a cusp. In such case the measure $ \mu $ is supported on horospheres based at parabolic fixed points. By ergodicity $ \mu $ has to be trivial, specifically supported on a single horosphere bounding a cusp.

In the case where $ I_\mu = \infty $ we can deduce from Canary's theorem (\cref{Canary's corollary}) that for $ \mu $-a.e.~$ x \in \GmodGamma $ there exists a subsequence of times in which $ \left( p(a_{-t}x) \right)_{t \geq 0} $ tend arbitrarily far away from the convex core $ C(\M) $, implying the geodesic rays must end outside the limit set, meaning $ \mu $-a.e.~$ U $-orbit is wandering. Ergodicity once more ensures the measure is trivial, specifically supported on a single wandering horosphere.
\medskip

Let us now assume that $ 0 < I_\mu < \infty $. We will show that in such case the measure $ \mu $ is $ MA $-quasi-invariant by showing the set
\[ \Xi_{g\Gamma} =\bigcap_{n=1}^\infty \overline{\bigcup_{t \geq n} a_{-t}g\Gamma g^{-1}a_t} \]
contains a Zariski-dense subgroup for $ \mu $-a.e.~$ g\Gamma $ and applying \cref{Thm_MA-qi}. 

Given any point $ g\Gamma $, the geodesic trajectory $ \left( p(a_{-t}g\Gamma) \right)_{t \geq 0} $ satisfies one of two possibilities, either
\begin{enumerate}
	\item the geodesic trajectory is recurrent, i.e.~returns infinitely often to some compact subset of $ \M $ (whenever the geodesic is facing a conical limit point\footnote{This is in fact the defining property of conical limit points.}); or
	\item  the geodesic trajectory is wandering to infinity, i.e.~permanently escapes every compact subset.
\end{enumerate}
Note that properties (1)-(2) are $ U $-invariant. Hence ergodicity implies that either $ \mu $-a.e.~geodesic trajectory is recurrent or $ \mu $-a.e.~trajectory is wandering.

In case of recurrence, we have for $ \mu $-a.e.~$ g\Gamma \in \GmodGamma $ a subsequence of times $ t_n \to \infty $ and an element $ h \in G $ for which $ a_{-t_n}g\Gamma \to h\Gamma $. Consequently
\[ a_{-t_n}g\Gamma g^{-1}a_{t_n} \longrightarrow h\Gamma h^{-1} \]
in the sense of Hausdorff convergence inside compact subsets of $ G $. Clearly $ h \Gamma h^{-1} $ is a Zariski dense subgroup of $ G $ (since $ \Gamma $ was assumed to be Zariski dense) and also $ h \Gamma h^{-1} \subseteq \Xi_{g\Gamma} $, implying $ \mu $ is $ MA $-quasi-invariant.
\medskip

Now assume $ \left( p(a_{-t}g\Gamma) \right)_{t \geq 0} $ is wandering for $ \mu $-a.e.~$ g\Gamma $. Fix 
\[ 0 < \epsilon < \min\{\rho_3,I_\mu\} \]
where $\rho_3$ is as in \S\ref{subsec_noncuspidal}. and let $ \mathcal{P} $ and $ C_{rel} $ be the corresponding cuspidal part and relative compact core of $ \M $. Let $ \{ E_1,...,E_\ell \} $ be the induced collection of relative ends of $ \M $. Recall that
\[ \M = (C_{rel} \cup \mathcal{P}) \sqcup \bigsqcup_{i=1}^{\ell} E_i. \]
For $ \mu $-a.e.~$ g\Gamma $ there exists a sequence $ t_n \to \infty $ with $ d_{\M}(p(a_{-t_n}x),C_{rel}) \to \infty $ and $ \injM(p(a_{-t_n}g\Gamma)) > \epsilon $ therefore by taking a subsequence we may assume there exists an $ 1 \leq i \leq \ell $ such that the points $ p(a_{-t_n}g\Gamma) $ are all contained inside $ E_i $. Furthermore, for any sub-neighborhood $ V $ in $ E_i $ in the sense of \S\ref{subsec_RelativeEnds}, the sequence $ (p(a_{-t_n}g\Gamma))_n $ is contained in $ V $ for all large $ n $. Since $ I_\mu < \infty $ we know the geodesic ray stays within a bounded distance of the convex core of $ \M $ (otherwise its lifts in $ \Hspace $ would tend to a limit point outside the limit set ensuring the injectivity radius tends to infinity). Hence $ E_i $ is a geometrically infinite end, implying $ p(a_{-t_n}g\Gamma) \in C(\M) $ for all large $ n $.

By taking a subsequence of times, we may assume that $ p(a_{-t_n}g\Gamma) \in C(\M) $ for all $ n $ and that the sequence of sets $ a_{-t_n}g\Gamma g^{-1}a_{t_n} $ converges to a closed subset $ \Sigma $ of $ G $ (as before in the sense of Hausdorff convergence inside compact subsets of $ G $). $ \Sigma $ is clearly a subgroup of $ G $ and the lower bound on the injectivity radius at $ p(a_{-t_n}g\Gamma) $ ensures $ \Sigma $ is discrete (recall \eqref{eq_injGamma}). 

Let $ \kappa > 0 $ be the upper bound on the injectivity radius in $ C(\M) $ ensured by Canary's theorem. Assume in contradiction that $ \Sigma $ is not Zariski dense in $ G $. It is hence a discrete subgroup of the second kind, implying by \cref{lemma_unbounded inj rad} that $ \Hspace/\Sigma $ has unbounded injectivity radius. Therefore there exists a constant $ R > 0 $ for which $ \Hspace / \Sigma $ contains an embedded hyperbolic ball of radius $ \kappa+1 $ contained inside a neighborhood of radius $ R $ around the identity coset in  $\Hspace / \Sigma $. This in turn implies that for all large $ n $ the radius $ R $ neighborhood of the point $ p(a_{-t_n}g\Gamma) $ in $ \M $ contains an embedded ball of radius greater than $ \kappa+\frac{1}{2} $ (see e.g.~\cite[Thm. 7.7]{Matsuzaki-Taniguchi}). Denote by $ y_n $ the center of said embedded ball. 

Clearly 
\[ y_n \in \M^0 \smallsetminus C(\M) = \M \smallsetminus (\mathcal{P} \cup C(\M)), \]
since
\[ \injM(y_n) > \kappa + \frac{1}{2} > \sup_{x \in C(\M)} \injM(x) \geq \varepsilon. \]
The geodesic arc
\[ \psi = [y_n,p(a_{-t_n}g\Gamma)] \]
connecting $ y_n $ with 
\[ p(a_{-t_n}g\Gamma) \in C(\M) \]
intersects $ \partial C(\M) $. This arc is also contained inside $ (\M^0)^{(R)} $, the $ R $-neighborhood of $ \M^0 $ (the non-cuspidal part), since $ d_\M \left( p(a_{-t_n}g\Gamma) , y_n  \right) < R $. 

Therefore $ \psi $ intersects the set
\[ Y = \partial C(\M) \cap \overline{(\M^0)^{(R)}}, \]
and consequently
\begin{equation*}
d_\M \left( p(a_{-t_n}g\Gamma) , Y  \right) < R
\end{equation*}
for all large $ n $.
But the Ahlfors finiteness theorem ensures $ Y $ is compact (see proof of \cref{lemma_infinite end inside convex core}), contradicting the assumption that the sequence $ \left( p(a_{-t}g\Gamma) \right)_{t \geq 0} $ is wandering.

Therefore $ \Sigma \subseteq \Xi_{g\Gamma} $ is a Zariski dense subgroup of $ G $ and by \cref{Thm_MA-qi} the measure $ \mu $ is $ MA $-quasi-invariant, concluding the proof.
\end{proof}

\begin{rem}
	The arguments in the proof of \cref{Main_thm}, specifically the argument used in the case where geodesic trajectories wander to infinity through a geometrically infinite end, may be used almost verbatim in any dimension to prove \cref{Thm_d-dim}.
\end{rem}
\medskip
\medskip

We conclude this section with the proof of \cref{Cor_P measures}:

\begin{proof}[Proof of \cref{Cor_P measures}]
	Let $ \nu $ be a $ U $-invariant measure on $ \GmodGamma $ which is $ P $-quasi-invariant and ergodic. Let
	\[
	\nu=\int_Y \!\nu_\xi\, d\rho(\xi)
	\]
	be the ergodic decomposition of the measure $ \nu $ with respect to the $ U $-action, where $Y$ is a standard Borel space, $\rho$ a probability measure on $Y$, and $\nu_\xi$ a $U$-invariant and ergodic locally finite measure on $\GmodGamma$. Such an ergodic decomposition can be found e.g.~in \cite{Greschonig-Schmidt} (there are several versions of the ergodic decomposition in that paper, but one can for instance apply \cite[Theorem 1.1]{Greschonig-Schmidt} and then conclude that each ergodic component is actually $U$-invariant); alternatively one can proceed as in \cite[Thm. 2.2.8]{Aaronson_Book}, substituting Hochman's ratio ergodic theorem \cite{Hochman2010} instead of the classical Hurewicz ratio ergodic theorem.
	
	By \cref{Main_thm}, for every $\xi$ the ergodic component $\nu_\xi$ is either trivial or $MA$-quasi invariant. 
	\medskip
	
	Let $ \pi : G \to P\backslash G $ be the projection onto $ P\backslash G \cong M \backslash K \cong \partial \Hspace $. Identifying $ G $ with $ \FHspace $, the frame bundle of hyperbolic $ 3 $-space, this quotient map corresponds to the projection of frames onto the limit points in $ \partial \Hspace $ they tend to under the geodesic flow $ a_{-t} $ as $ t \to +\infty $. 
	
	Let $ \Lambda $ denote the limit set of $ \Gamma $ in $ P \backslash G $ and let $ \Lambda_{\text{par}} $ denote the subset of parabolic fixed points. The $P$-ergodicity of $ \nu $ ensures that either 
	\begin{enumerate}
		\item $\nu_\xi$ is $ MA $-quasi-invariant for $\rho$-a.e. $\xi$;
		\item $\nu_\xi$ is trivial and supported on $ \pi^{-1}(\Lambda_{\text{par}})/\Gamma $ for $\rho$-a.e. $\xi$; or
		\item $\nu_\xi$ is trivial and supported on $ \pi^{-1}(P \backslash G \smallsetminus \Lambda)/ \Gamma $ for $\rho$-a.e. $\xi$.
	\end{enumerate}
	\noindent This follows from the fact that both $ \Lambda_{\text{par}} $ and $ P\backslash G \smallsetminus \Lambda $ are $ P $-invariant, ensuring the respective sets in $ \GmodGamma $ are either null or conull with respect to $ \nu $. Whenever both sets are $ \nu $-null then \cref{Main_thm} implies case (1) is the only possible alternative.
	
	Note that if (1) holds then any $U$-invariant set is (up to a null set) also $P$ invariant. Hence by $P$ ergodicity the $\sigma$-algebra of $U$-invariant sets is equivalent mod $\nu$ to the trivial $\sigma$-algebra, i.e. $\nu$ is $ U $-ergodic (hence $ \nu_\xi = \nu$ for $\rho $-a.e.~$\xi$).
	\medskip
	
	We claim the remaining two alternatives imply $ \nu $ is supported on a single $ P $-orbit. Indeed recall that $ \Lambda_{\text{par}} \subseteq P \backslash G $ is countable, hence if $ \nu $ gives positive measure to $ \pi^{-1}(\Lambda_{\text{par}})/\Gamma $ there must exist a single $ P $-orbit receiving positive mass. $ P $-ergodicity of $ \nu $ implies this is the entire support of $ \nu $.
	
	Assume the third alternative , i.e.~that $ \nu $-a.e.~$ P $-orbit is based outside the limit set. By the $ P $-ergodicity of $\nu$, there exists a point $ g\Gamma \in \GmodGamma $ whose $ P $-orbit is dense in $ \mathrm{supp}(\nu) $ (the topological support of $ \nu $). Assume in contradiction that there exists a point $ h\Gamma \in \mathrm{supp}(\nu) $ with a $ P $-orbit based outside the limit set of $ \Gamma $ and satisfying $ Pg\Gamma \neq Ph\Gamma $. 
	
	Since $ \Gamma $ acts properly discontinuously on $ P \backslash G \smallsetminus \Lambda $ and since $ Pg\Gamma \neq Ph\Gamma $, there exists an open neighborhood $ W \subset P \backslash G $ of $ \pi(h) $ for which
	\begin{equation*}
	\pi(g)\Gamma \cap W\Gamma = \emptyset.
	\end{equation*}
	But this in turn implies that
	\[ Pg\Gamma \cap \pi^{-1}(W)\Gamma = \emptyset \]
	where $ \pi^{-1}(W)\Gamma $ is an open neighborhood of the point $ h\Gamma $ in $ \GmodGamma $, contradicting the fact that $ Pg\Gamma $ is dense in $ \mathrm{supp}(\nu) $. Therefore, $ \nu $ is supported on a single $ P $-orbit as claimed.
\end{proof}

\end{document}